\newtheorem{theorem}{Theorem}[section]
\newtheorem{lemma}{Lemma}[section]
\newtheorem{corollary}{Corollary}[section]
\newtheorem{example}{Example}[section]
\newtheorem{proposition}{Proposition}[section]
\numberwithin{equation}{section}
\renewcommand*\env@matrix[1][\arraystretch]{%
  \edef\arraystretch{#1}%
  \hskip -\arraycolsep
  \let\@ifnextchar\new@ifnextchar
  \array{*\c@MaxMatrixCols c}}
\newcommand{\spec}{\operatorname{spec}}
\newcommand{\rank}{\operatorname{rank}}
\newcommand{\trace}{\operatorname{trace}}
\date{}
\begin{document}
\title{\textbf{On Randi\'c energy of a vertex}}
	\date{}
\author{Idweep J. Gogoi$^1$, J. Buragohain$^2$, A. Bharali$^3$, E. Devi$^4$ \\
	Department of Mathematics, Dibrugarh University, Assam, India$^{1,\;2\;3\;4}$\\
rs_idweepjyotigogoi@dibru.ac.in$^1$\footnote{Corresponding author: rs_idweepjyotigogoi@dibru.ac.in}, jibonjyoti@dibru.ac.in$^2$, a.bharali@dibru.ac.in$^3$,\\ rs_erashikhadevi@dibru.ac.in$^4$}
\maketitle

\begin{abstract}
In 2018, Arizmendi and Juarez introduced the concept of energy of a vertex, a novel approach allowing the total energy of a graph to be expressed as the sum of the energies of its individual vertices. In this article, we extend the notion of energy of a vertex to the context of the Randi\'c matrix. We define the Randi\'c energy of a vertex and explore its mathematical properties through various combinatorial techniques. We derive several upper and lower bounds for the Randi\'c energy of a vertex. Furthermore, we establish that among the connected graphs, the central vertex of a star attains the maximum Randi\'c energy, whereas pendent vertices attain the minimum. Also, we report the Coulson-type integral formula for the Randi\'c energy of a vertex and its applications.\\\\
\itshape \bf {Keywords:} \normalfont Graph, Randi\'c matrix, Randi\'c energy, Randi\'c energy of a vertex.\normalfont \\
\itshape \bf {AMS Classifications (2010):} 05C50, 05C07, 15A18.
\end{abstract}
\section{Introduction}
Throughout the paper, we consider connected simple graph with at least two vertices. Let $G\coloneqq(V,E)$ be a graph with $n$ vertices and $m$ edges, the vertex set $V(G)\coloneqq\{v_1,\ldots,v_n\}$ and edge set $E(G)\coloneqq\{e_1,\ldots,e_m\}$. The degree of a vertex $v_i$ is denoted by $d_i$. Let $\mathcal{M}_n(\mathbb{C})$ denote the set of all $n\times n$ matrices with complex entries. For $M \in \mathcal{M}_n(\mathbb{C})$, the absolute values of $M$ is denoted by $|M|\coloneqq(MM^*)^{1/2}$, where $M^*$ is the conjugate transpose of the matrix $M$.

Spectral graph theory has significant applications in chemistry, particularly in modeling the energy levels of $\pi$-electrons in conjugated hydrocarbons \cite{Coulson1940}. In 1978, Gutman \cite{Gutman1978} introduced the concept of graph energy, based on the eigenvalues of the adjacency matrix $A(G)$ or simple $A$. If $\delta_1,\ldots,\delta_n$ be the eigenvalues of $A(G)$, then the energy \cite{Gutman1978} of $G$ is
$$\mathcal{E}(G)=\sum_{i=1}^n\left|\delta_i\right|.$$
The concept of graph energy became foundational in chemical graph theory literature. Motivated by its success, various extensions of graph energy have since been proposed using alternative matrices beyond the adjacency matrix.

The matrix $D^{-1/2}A(G)D^{-1/2}$ is called the normalized adjacency matrix of an un-oriented graph $G$, where $D\coloneqq\text{diag}\left(d_1,\ldots,d_n\right)$. This matrix is popularly known as the Randi\'c matrix $R(G)$. If $R(G)\coloneqq (R_{ij})$, then
$$
R_{ij}=\left\{
  \begin{array}{ll}
    \frac{1}{\sqrt{d_id_j}} & \hbox{if $v_i \sim v_j$} \\
    \;\;\;0 & \hbox{otherwise.}
 \end{array}
\right.
$$
Clearly, $R(G)$ is Hermitian. Rodr\'{i}guez \cite{Rodriguez2005} first studied the Randi\'c matrix in 2005. Later, Bozkurt et al. \cite{Bozkurt2010} proposed the Randi\'c energy $\mathcal{RE}(G)$ of a graph $G$ as
$$\mathcal{RE}(G)=\sum\limits_{i=1}^n |\lambda_i|,$$
where $\lambda_1,\ldots,\lambda_n$ are the eigenvalues of $R(G)$. The spectral properties and energy of the Randi\'c matrix have been widely explored, leading to several notable bounds on Randi\'c energy (see \cite{Li2015, Liu2012, Das2014, Das2015, Estrada2017}). In 2018, Arizmendi and Juarez-Romero \cite{Arizmendi2018Romero} introduced a refinement of graph energy known as the energy of a vertex, which was further developed in the same year by Arizmendi et al. \cite{Arizmendi2018}, offering a more localized perspective on graph energy. It is denoted by $\mathcal{E}_G(v_i)$ and is defined as
$$\mathcal{E}_G(v_i)=|A(G)|_{ii},$$
and hence the energy of the graph $G$ is
$$\mathcal{E}(G)=\trace(|A(G)|)=\sum\limits_{i=1}^n |A(G)|_{ii}.$$

The motivation behind the energy of a vertex is to interpret it as the contribution of an individual vertex to the total energy of the graph based on its interaction with other vertices. Following this reasoning, we define the Randi\'c energy of a vertex analogously, offering a localized measure within the framework of the Randi\'c matrix.

\section{Preliminaries}
Consider the positive linear functional $\phi_i:\mathcal{M}_n\rightarrow \mathbb{C}$, which is define for a graph $G$ by $\phi_i\left(R(G)\right)\mapsto R(G)_{ii}$. Using this, we can find that the trace can be decomposed as the sum of positive linear functionals,
$$\trace(R(G))=\phi_1(R(G))+\ldots+\phi_n(R(G)).$$
Let us define
$$\mathcal{RE}_G(v_i)=\phi_i(|R(G)|)=|R(G)|_{ii},$$
where $|R(G)|\coloneqq(R(G)R^*(G))^{1/2}$ is the absolute value of $R(G)$. Observe that $\trace(|R(G)|)=\phi_1(|R(G)|)+\ldots+\phi_n(|R(G)|),$ which implies $\mathcal{RE}(G)=\mathcal{RE}_G(v_1)+\ldots+\mathcal{RE}_G(v_n).$ Hence the Randi\'c energy of the graph $G$ is
$$\mathcal{RE}(G)=\sum\limits_{i=1}^n |R(G)|_{ii}.$$

In \cite{Arizmendi2018}, the authors discussed a method to calculate the energy of a vertex in terms of the eigenvalues and eigenvectors of adjacency matrix. A similar approach can be adopted to compute the Randi\'c energy of a vertex in a graph.

\begin{lemma}\label{Lemma definittion of vetrex energy}
Let $G=(V,E)$ be a connected graph and $\lambda_1\geq\ldots\geq\lambda_n$ are eigenvalues of $R(G)$, then $\forall$ $ v_i\in V$
$$\mathcal{RE}_G(v_i)=\sum\limits_{j=1}^ny^{2}_{ij}|\lambda_j|,$$
where $Y\coloneqq (y_{ij})$ be the orthogonal matrix whose columns are the real orthonormal eigenvectors of $R(G)$.
\end{lemma}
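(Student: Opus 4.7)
The plan is to exploit the spectral decomposition of $R(G)$. Since the Randi\'c matrix has real entries and satisfies $R_{ij}=R_{ji}$, it is a real symmetric matrix, so the spectral theorem guarantees an orthogonal matrix $Y=(y_{ij})$ whose columns are real orthonormal eigenvectors of $R(G)$ with corresponding real eigenvalues $\lambda_1\geq\cdots\geq\lambda_n$. Writing $\Lambda\coloneqq \operatorname{diag}(\lambda_1,\ldots,\lambda_n)$, we have $R(G)=Y\Lambda Y^T$.

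Next I would compute $R(G)R(G)^*=R(G)^2 = Y\Lambda Y^T Y\Lambda Y^T = Y\Lambda^2 Y^T$, using $Y^T Y = I$. The matrix $M\coloneqq Y|\Lambda|Y^T$, with $|\Lambda|=\operatorname{diag}(|\lambda_1|,\ldots,|\lambda_n|)$, is positive semidefinite (it is orthogonally diagonalizable with nonnegative eigenvalues $|\lambda_j|$) and satisfies $M^2=Y|\Lambda|^2 Y^T=Y\Lambda^2 Y^T=R(G)R(G)^*$. By uniqueness of the positive semidefinite square root, $|R(G)|=(R(G)R(G)^*)^{1/2}=Y|\Lambda|Y^T$.

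Finally, I would extract the $(i,i)$ diagonal entry. Writing out the matrix product explicitly gives
\[
|R(G)|_{ii}=\sum_{j=1}^n\sum_{k=1}^n y_{ij}\,|\Lambda|_{jk}\,(Y^T)_{ki}=\sum_{j=1}^n y_{ij}^2|\lambda_j|,
\]
since $|\Lambda|$ is diagonal and $(Y^T)_{ji}=y_{ij}$. Combining with the definition $\mathcal{RE}_G(v_i)=|R(G)|_{ii}$ yields the claim.

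The argument is essentially a direct application of functional calculus for symmetric matrices, so there is no serious obstacle; the only point that requires care is the justification that $Y|\Lambda|Y^T$ really is the absolute value $(R(G)R(G)^*)^{1/2}$, which follows cleanly from the uniqueness of the positive semidefinite square root.
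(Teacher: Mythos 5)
Your proof is correct and follows essentially the same route as the paper: spectral decomposition $R(G)=Y\Lambda Y^T$, identification of $|R(G)|$ with $Y|\Lambda|Y^T$, and reading off the $(i,i)$ entry. The only difference is that you explicitly justify the square-root step via uniqueness of the positive semidefinite square root, which the paper takes for granted.
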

\begin{proof}
The matrix $R(G)$ can be express as $$R(G)=Y\Lambda Y^t,$$
where $Y\coloneqq (y_{ij})$ be an orthogonal matrix whose columns are the real orthonormal eigenvector of $R(G)$ and $\Lambda=\text{diag}\left(\lambda_1,\ldots,\lambda_n\right)$. It can easily be seen that $Y$ satisfies $\sum\limits_{i=1}^ny^2_{ij}=1$ and $\sum\limits_{j=1}^ny^2_{ij}=1$. So, the absolute value of the matrix $R(G)$ is
$$|R(G)|=(RR^*)^{\frac{1}{2}}=Y(\Lambda \Lambda^*)^{\frac{1}{2}}Y^t.$$
Note that $(\Lambda \Lambda^*)^{\frac{1}{2}}=\left(|\lambda_1|,\ldots,|\lambda_n|\right).$
Thus,
\begin{align*}
&|R(G)|_{ii}=\sum\limits_{j,k=1}^nY_{ik}\left[(\Lambda \Lambda^*)^{1/2}\right]_{kj}Y^t_{ji}=\sum\limits_{j=1}^ny_{ij}|\lambda_j|y_{ij}=\sum\limits_{j=1}^ny^{2}_{ij}|\lambda_j|.\qedhere
\end{align*}
\end{proof}

\begin{example}Consider the following graph.
\begin{figure}[H]
  \begin{center}
\begin{tikzpicture}[scale =.8, vertex/.style={circle, draw, minimum size=8mm, inner sep=0pt}]
    \node[vertex, label=below:$v_1$] (1) at (-.4,-2) {\small{0.3382}};
    \node[vertex, label=below:$v_2$] (2) at (1,-1.5) {\small{0.5847}};
    \node[vertex, label=below:$v_3$] (3) at (2.5,-1.5) {\small{0.3382}};
    \node[vertex, label=right:$v_4$] (4) at (1,0) {\small{0.6990}};
    \node[vertex, label=above:$v_5$] (5) at (2,1.5) {\small{0.5468}};
    \node[vertex, label=above:$v_6$] (6) at (0.2,1.7) {\small{0.5468}};
    \node[vertex, label=left:$v_7$] (7) at (-2,0) {\small{0.3172}};

    \draw (1) -- (2) -- (3) -- (4) -- (2) -- (1) -- (4);
    \draw (4) -- (5) -- (6) -- (4);
    \draw (4) -- (7);
\end{tikzpicture}
\caption{Graph with 7 vertices}
\end{center}
\end{figure}
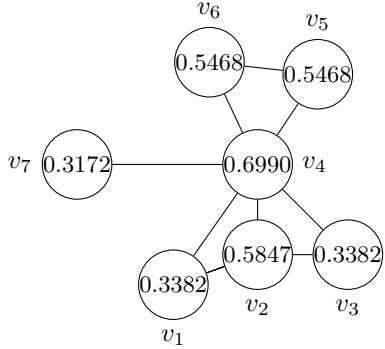

The Randi\'{c} eigenvalues of G are $\{-0.6855, -0.5000, -0.4999, 0, 0.1518, 0.5336, 1\}$ and the corresponding eigenvectors are given by the columns of matrix $Y$ as follows:
\[
Y =
\begin{bmatrix}
-0.4389   & 0.0000 &   -0.2722  & -0.7071  & -0.1375 &   0.3214  &  0.3333\\
    0.3199 &  -0.0000 &   0.6667  &  0.0000 &  -0.2666 &   0.4642 &   0.4082\\
   -0.4389 &   0.0000 &  -0.2722 &   0.7071 &  -0.1375  &  0.3214  &  0.3333\\
    0.5898  &  0.0000 &  -0.4713 &   0.0000 &   0.3046  & -0.0623  &  0.5774\\
   -0.1436  &-0.7071  &  0.1361  &  0.0000  & -0.2526   &-0.5347   & 0.3334\\
   -0.1436  &  0.7071 &   0.1361 &   0.0000 &  -0.2526  & -0.5347  &  0.3334\\
   -0.3512  & -0.0000 &   0.3848 &  -0.0000 &   0.8190  & -0.0476  &  0.2357
\end{bmatrix}
\]
Using Lemma \ref{Lemma definittion of vetrex energy}, we have
\begin{align*}
\mathcal{RE}_{G}(v_1)=& \sum_{j =1}^{7} y_{1j}^2 \; |\lambda_j| \\
  =& (0.4389)^{2}(0.6855) + (0)^{2}(0.5) + (-0.2722)^{2}(0.4999)+ (-0.7071)^{2}(0) \\
  & + (-0.1375)^{2}(0.1518) + (-0.3214)^{2}(0.5336)+ (0.3333)^{2}(1) \\
  \approx &\; 0.3382.
\end{align*}
Similarly, $\mathcal{RE}_{G}(v_2) = 0.5847$, $\mathcal{RE}_{G}(v_3) = 0.3382$, $\mathcal{RE}_{G}(v_4) = 0.6990$, $\mathcal{RE}_{G}(v_5) = 0.5468$, $\mathcal{RE}_{G}(v_6) = 0.5468$, $\mathcal{RE}_{G}(v_7) = 0.3172$.\\
\end{example}

By a similar argument as in the proof of Lemma \ref{Lemma definittion of vetrex energy}, we may obtain the following lemma.
\begin{lemma}\label{Lemma of iith entry of Randic higher power}
Let $\phi_i:M_n\rightarrow \mathbb{C}$ be a positive map define by $\phi_i(R)\mapsto R_{ii}$, then
$$\phi_i(R^k(G))=\sum\limits_{j=1}^ny^{2}_{ij}\lambda_j^k,$$
where $Y\coloneqq(y_{ij})$ is an orthogonal matrix whose columns are given by the eigenvectors of $R(G)$.
\end{lemma}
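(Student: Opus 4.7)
The plan is to mirror the argument used in Lemma \ref{Lemma definittion of vetrex energy}, replacing the matrix $(\Lambda\Lambda^*)^{1/2}$ with $\Lambda^k$. The key facts we need are the spectral decomposition of $R(G)$ and the orthogonality of $Y$; everything else is a direct entrywise computation.

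First I would start from the spectral decomposition $R(G)=Y\Lambda Y^t$ guaranteed by the fact that $R(G)$ is Hermitian with real entries, and that $Y$ can be chosen to be a real orthogonal matrix whose columns are the orthonormal eigenvectors of $R(G)$, with $\Lambda=\mathrm{diag}(\lambda_1,\ldots,\lambda_n)$. Using $Y^tY=YY^t=I$, an easy induction on $k$ gives
\[
R(G)^k \;=\; Y\Lambda^k Y^t,
\]
because the inner factors telescope as $Y^tY=I$. Note $\Lambda^k=\mathrm{diag}(\lambda_1^k,\ldots,\lambda_n^k)$ is still diagonal.

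Next I would read off the $(i,i)$ entry by expanding the triple product in indices. Writing $Y=(y_{ij})$ and using that $\Lambda^k$ is diagonal,
\[
\bigl(R(G)^k\bigr)_{ii}
=\sum_{j,l=1}^{n} y_{ij}\,(\Lambda^k)_{jl}\,y^{t}_{li}
=\sum_{j=1}^{n} y_{ij}\,\lambda_j^k\,y_{ij}
=\sum_{j=1}^{n} y_{ij}^{2}\,\lambda_j^{k}.
\]
Applying the positive linear functional $\phi_i$, which by definition picks out the $(i,i)$ entry, yields $\phi_i(R(G)^k)=\sum_{j=1}^{n} y_{ij}^{2}\lambda_j^{k}$, which is the desired formula.

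I do not anticipate any real obstacle here: the entire proof is a bookkeeping exercise once one has the spectral decomposition in hand. The only point that deserves a brief mention is that the same orthogonal matrix $Y$ diagonalizes every power $R(G)^k$ simultaneously, so the formula holds uniformly in $k$ with the same eigenvector coefficients $y_{ij}^{2}$ appearing as in Lemma \ref{Lemma definittion of vetrex energy}.
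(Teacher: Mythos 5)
Your proof is correct and follows exactly the route the paper intends: the paper gives no separate proof for this lemma, remarking only that it follows "by a similar argument as in the proof of Lemma \ref{Lemma definittion of vetrex energy}", which is precisely the spectral decomposition $R(G)=Y\Lambda Y^t$, the telescoping $R(G)^k=Y\Lambda^k Y^t$, and the entrywise reading of the $(i,i)$ entry that you carry out. No gaps.
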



\section{Some bounds on Randi\'c energy of a vertex}
In this section, we study various upper and lower bounds for the Randi\'c energy of a vertex. We begin with some known results which are associated to our findings.

\begin{theorem}\label{Randic spectral radius}\cite{Liu2012}
The Randi\'c spectral radius is 1.
\end{theorem}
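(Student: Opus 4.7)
The plan is to show two things: (i) the value $1$ is actually attained as an eigenvalue of $R(G)$, and (ii) every eigenvalue $\lambda$ of $R(G)$ satisfies $|\lambda|\le 1$. Combined, these give $\rho(R(G))=1$.

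For step (i), I would exhibit an explicit eigenvector. The natural candidate is $w=(\sqrt{d_1},\sqrt{d_2},\ldots,\sqrt{d_n})^{t}$, which is the image of the all-ones vector under $D^{1/2}$. A direct computation gives
\[
(Rw)_{i}=\sum_{j=1}^{n}R_{ij}\sqrt{d_{j}}=\sum_{v_{j}\sim v_{i}}\frac{1}{\sqrt{d_{i}d_{j}}}\cdot\sqrt{d_{j}}=\frac{1}{\sqrt{d_{i}}}\sum_{v_{j}\sim v_{i}}1=\frac{d_{i}}{\sqrt{d_{i}}}=\sqrt{d_{i}},
\]
so $Rw=w$ and hence $1\in\spec(R(G))$.

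For step (ii), I would exploit the similarity $R(G)=D^{-1/2}AD^{-1/2}\sim D^{-1}A$, obtained via conjugation by $D^{1/2}$. The matrix $P=D^{-1}A$ is the transition matrix of the simple random walk on $G$, so each of its rows is non-negative with row sum equal to $1$ and with zero diagonal. Applying Gershgorin's disc theorem to $P$ places every eigenvalue $\mu$ inside the closed disc $\{z:|z|\le 1\}$; since $R(G)$ is real symmetric, its eigenvalues (which coincide with those of $P$) are real, so each lies in $[-1,1]$. This gives $|\lambda|\le 1$ for every eigenvalue $\lambda$ of $R(G)$.

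I do not foresee a substantive obstacle: the argument is short and relies only on the similarity $R(G)\sim D^{-1}A$ together with Gershgorin. The only mild subtlety is justifying that one may invoke Gershgorin on the non-symmetric matrix $P$ to control the eigenvalues of the symmetric matrix $R(G)$, but this is immediate from the fact that similar matrices share their spectrum. An alternative finish would be to use Perron--Frobenius on the non-negative irreducible matrix $R(G)$ (connectedness of $G$ ensures irreducibility) to conclude that the spectral radius is attained as a positive eigenvalue, which by step (i) must equal $1$.
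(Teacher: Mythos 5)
The paper does not prove this statement; it is quoted as a known result with a citation to Liu, Huang and Feng, so there is no in-paper argument to compare against. Your proposal is a correct and complete proof. Step (i) is a direct verification that $w=D^{1/2}\mathbf{1}$ is a fixed vector of $R(G)$, and step (ii) correctly transfers the spectrum of $R(G)$ to the row-stochastic matrix $P=D^{-1}A$ via the similarity $D^{-1/2}R(G)D^{1/2}=D^{-1}A$ (valid because the standing assumption of connectedness with $n\ge 2$ forces all degrees to be positive, so $D^{\pm 1/2}$ exist), and Gershgorin on $P$ gives $|\lambda|\le 1$ for every eigenvalue. Your alternative finish via Perron--Frobenius is in fact slightly cleaner: since $R(G)$ is non-negative and irreducible for connected $G$, the existence of the strictly positive eigenvector $w$ for the eigenvalue $1$ already identifies $1$ as the Perron root, making step (ii) unnecessary. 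Either route is fine and both are standard arguments for the normalized adjacency matrix.
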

\begin{theorem}\label{Randic spectral radius multiplicity}\cite{Liu2012}
Let $G$ be a graph, if $G$ is connected then 1 is a simple eigenvalue of $R(G)$.
\end{theorem}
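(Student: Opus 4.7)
The plan is to prove the statement in two steps: first confirm that $1$ is indeed an eigenvalue of $R(G)$ by exhibiting an explicit eigenvector, and then argue that its multiplicity is exactly one under the connectedness assumption.

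For the first step, I would take the candidate vector $v=(\sqrt{d_1},\ldots,\sqrt{d_n})^t$ and compute coordinate-wise
$$ (R(G)v)_i=\sum_{j\sim i}\frac{1}{\sqrt{d_id_j}}\sqrt{d_j}=\frac{1}{\sqrt{d_i}}\sum_{j\sim i}1=\frac{d_i}{\sqrt{d_i}}=\sqrt{d_i}, $$
so $R(G)v=v$ and $1\in\spec(R(G))$. Combined with Theorem \ref{Randic spectral radius}, this identifies $1$ as the Perron root of $R(G)$.

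For the second step, I would invoke the Perron--Frobenius theorem. The matrix $R(G)$ is symmetric and entry-wise non-negative, and its support pattern coincides exactly with that of $A(G)$; hence $R(G)$ is irreducible precisely when $G$ is connected. Applying Perron--Frobenius to the irreducible non-negative matrix $R(G)$, whose spectral radius equals $1$, forces $1$ to be a simple eigenvalue.

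As a self-contained alternative one can give a maximum-principle argument. If $R(G)x=x$ and we set $y_i\coloneqq x_i/\sqrt{d_i}$, then a short manipulation yields $y_i=\frac{1}{d_i}\sum_{j\sim i}y_j$, i.e., $y$ is harmonic with respect to the simple random walk on $G$. Any vertex $i^{*}$ at which $y$ attains its maximum forces each neighbor of $i^{*}$ to also attain that maximum (otherwise the convex combination would be strictly smaller), and iterating this along paths in the connected graph $G$ shows that $y$ is constant, so $x$ is a scalar multiple of $v$. The only genuinely delicate point is the simplicity claim---existence of the eigenvalue is a one-line calculation---and connectedness enters exactly to rule out an eigenspace that decomposes over distinct components, either through the irreducibility hypothesis of Perron--Frobenius or through the propagation step in the harmonic-function argument.
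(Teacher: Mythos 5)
This theorem is quoted in the paper from \cite{Liu2012} without proof, so there is no in-paper argument to compare against; your task here was effectively to supply a proof of an imported result, and what you wrote is correct. The eigenvector computation showing $R(G)(\sqrt{d_1},\ldots,\sqrt{d_n})^t=(\sqrt{d_1},\ldots,\sqrt{d_n})^t$ is exactly the standard verification (the paper itself uses this eigenvector later, in the proof of the bound involving $d_i/2m$), and both of your routes to simplicity are sound: $R(G)$ has the same zero pattern as $A(G)$, so it is irreducible precisely when $G$ is connected, and Perron--Frobenius then makes the spectral radius a simple eigenvalue; alternatively, the substitution $y_i=x_i/\sqrt{d_i}$ turns the eigenvalue-$1$ equation into harmonicity for the simple random walk, and the maximum principle plus connectedness forces $y$ to be constant. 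One small point worth making explicit in the second argument: it directly bounds only the geometric multiplicity, but since $R(G)$ is real symmetric, geometric and algebraic multiplicities coincide, so the conclusion that $1$ is simple follows. The harmonic-function version is the more self-contained of the two, as it avoids invoking Perron--Frobenius and the separately cited fact that the Randi\'c spectral radius equals $1$.
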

\begin{theorem}\label{Randic eigenvalue symmetricity}\cite{Li2015}
Let $G$ be a connected graph, then the spectrum of $R(G)$ is symmetric about 0 if and only if $G$ is a bipartite graph.
\end{theorem}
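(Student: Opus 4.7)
The plan is to prove the two directions separately, mirroring the classical argument for the adjacency matrix but keeping track of the Randi\'c normalization. I would expect both halves to fit on a few lines once the right bookkeeping is set up, since the Randi\'c matrix inherits most of the structural features of $A(G)$ under the diagonal scaling $D^{-1/2} \, \cdot \, D^{-1/2}$.

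For the ($\Leftarrow$) direction, suppose $G$ is bipartite with bipartition $V(G)=X\sqcup Y$. Ordering the vertices so that those in $X$ come first, the Randi\'c matrix takes the block form
\[
R(G)=\begin{pmatrix} 0 & B \\ B^{t} & 0 \end{pmatrix},
\]
because $R_{ij}$ is supported on edges and every edge goes between $X$ and $Y$. Introducing the signing $S=\operatorname{diag}(I_{|X|},-I_{|Y|})$, a direct computation gives $S\,R(G)\,S=-R(G)$. Hence if $R(G)v=\lambda v$ then $R(G)(Sv)=-\lambda(Sv)$, so the map $\lambda\mapsto -\lambda$ is a bijection of the spectrum, which establishes symmetry about $0$.

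For the ($\Rightarrow$) direction, I would argue via traces of odd powers. Symmetry of the spectrum about $0$ gives $\sum_{i=1}^{n}\lambda_i^{2k+1}=0$, and hence $\operatorname{trace}(R(G)^{2k+1})=0$, for every integer $k\ge 0$. On the other hand, expanding $R^{2k+1}$ combinatorially,
\[
\bigl(R(G)^{2k+1}\bigr)_{ii}=\sum_{v_i=v_{j_0}\sim v_{j_1}\sim\cdots\sim v_{j_{2k+1}}=v_i}\prod_{\ell=0}^{2k}\frac{1}{\sqrt{d_{j_\ell}d_{j_{\ell+1}}}},
\]
so $\operatorname{trace}(R(G)^{2k+1})$ is a sum of \emph{strictly positive} terms, one for every closed walk of length $2k+1$ in $G$. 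The vanishing of this trace for all $k$ therefore forces $G$ to have no closed walk of odd length, and in particular no odd cycle; thus $G$ is bipartite.

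The main obstacle, and the only place where the Randi\'c setting genuinely differs from the adjacency setting, is making sure the closed-walk expansion of $\operatorname{trace}(R^{2k+1})$ is truly a sum of positive contributions, so that no cancellation can occur. Once one verifies that each factor $1/\sqrt{d_{j_\ell}d_{j_{\ell+1}}}$ is positive and that at least one closed walk of length $2k+1$ exists whenever $G$ contains an odd cycle (by repeatedly traversing that cycle), the equivalence with bipartiteness follows exactly as in the classical case, so I do not anticipate any further technical hurdle.
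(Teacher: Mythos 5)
The paper does not prove this statement; it is quoted verbatim from \cite{Li2015} as a known prerequisite, so there is no in-paper argument to compare against. Judged on its own, your proof is correct and is essentially the standard one. The ($\Leftarrow$) direction via the signing matrix $S=\operatorname{diag}(I_{|X|},-I_{|Y|})$ and the conjugation identity $SR(G)S=-R(G)$ is sound, since $R(G)$ has the same zero pattern as $A(G)$ and therefore inherits the bipartite block form. The ($\Rightarrow$) direction via $\operatorname{trace}(R(G)^{2k+1})=\sum_i\lambda_i^{2k+1}=0$ is also sound: every entry of $R(G)$ is nonnegative and is strictly positive exactly on edges, so each closed walk of odd length contributes a strictly positive term, and the vanishing of all odd traces forces the absence of odd closed walks, hence of odd cycles, hence bipartiteness. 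Your worry about cancellation is exactly the right thing to check, and it is resolved by the positivity of the weights. One small point worth making explicit: an odd cycle of length $\ell$ guarantees closed walks of every odd length $\ell+2t$ (pad by traversing an edge back and forth), so it suffices that \emph{some} odd trace is nonzero, which your argument delivers. An alternative, slightly slicker route for ($\Rightarrow$) in the connected case, which meshes with Theorems~\ref{Randic spectral radius} and \ref{Randic spectral radius multiplicity} of the paper, is Perron--Frobenius: $R(G)$ is an irreducible nonnegative matrix with spectral radius $1$, and symmetry of the spectrum forces $-1\in\operatorname{spec}(R(G))$, which for irreducible nonnegative matrices implies the underlying graph is bipartite. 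Your trace argument is more elementary and does not need connectivity, so it is, if anything, marginally more general than the statement requires.
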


From Theorems \ref{Randic spectral radius}, \ref{Randic spectral radius multiplicity} and \ref{Randic eigenvalue symmetricity}, we have the following result for the supremum Randi\'c energy of a vertex.

\begin{theorem}
Let $G=(V,E)$ be a connected graph with at least two vertices, then $\forall$ $v_i\in V$,
$$\mathcal{RE}_G(v_i)\leq 1,$$
equality holds if $G$ is a star graph, with vertex $v_i$ being the center of the star.
\end{theorem}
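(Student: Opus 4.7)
The plan is to read off the upper bound directly from Lemma \ref{Lemma definittion of vetrex energy} combined with Theorem \ref{Randic spectral radius}. Lemma \ref{Lemma definittion of vetrex energy} expresses $\mathcal{RE}_G(v_i)=\sum_{j=1}^n y_{ij}^{2}|\lambda_j|$, and because the rows of the orthogonal matrix $Y$ are unit vectors, $\sum_{j=1}^n y_{ij}^{2}=1$. Theorem \ref{Randic spectral radius} gives $|\lambda_j|\leq 1$ for every $j$. Hence
\[
\mathcal{RE}_G(v_i)=\sum_{j=1}^n y_{ij}^{2}|\lambda_j|\;\leq\;\sum_{j=1}^n y_{ij}^{2}=1,
\]
which establishes the bound in one step.

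For the equality case my plan is to identify exactly when all the slack in the inequality can vanish. Equality forces $y_{ij}=0$ for every index $j$ with $|\lambda_j|<1$, so the $i$-th row of $Y$ must be supported on those columns whose eigenvalues are $\pm 1$. By Theorem \ref{Randic spectral radius multiplicity} the eigenvalue $1$ is simple, and by Theorem \ref{Randic eigenvalue symmetricity} the eigenvalue $-1$ appears only when $G$ is bipartite (and then also with multiplicity one, by spectral symmetry combined with simplicity of $+1$). Thus the supporting columns of row $i$ are either $\{+1\}$ or $\{+1,-1\}$.

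To conclude I will invoke the explicit form of the Perron eigenvector of $R(G)=D^{-1/2}AD^{-1/2}$: the normalized eigenvector for $\lambda=1$ has coordinates $\sqrt{d_i/2m}$, and in the bipartite case the eigenvector for $\lambda=-1$ is obtained by flipping signs across the bipartition, so $|y_{i,-}|=\sqrt{d_i/2m}$ as well. In the non-bipartite case, equality would require $d_i/2m=1$, which is impossible for a connected graph with $n\geq 2$. In the bipartite case the condition becomes $d_i/m=1$, i.e., $v_i$ is incident to every edge, forcing $G$ to be a star centred at $v_i$. Conversely, for $G=K_{1,n-1}$ with $v_i$ at the hub, the Randi\'c spectrum is $\{1,-1,0,\ldots,0\}$, and a direct application of Lemma \ref{Lemma definittion of vetrex energy} yields $\mathcal{RE}_G(v_i)=\tfrac12+\tfrac12=1$, closing the argument.

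The main obstacle is the equality analysis rather than the bound itself. It relies on knowing (or briefly deriving) the explicit Perron eigenvector $\sqrt{d_i/2m}$ of $R(G)$, and on translating the arithmetic identity $d_i=m$ into the combinatorial statement that $v_i$ must touch every edge and hence $G$ must be a star. The upper bound is essentially immediate from the two preceding theorems.
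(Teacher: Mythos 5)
Your bound is exactly the paper's: plug the eigenvector expansion $\mathcal{RE}_G(v_i)=\sum_j y_{ij}^2|\lambda_j|$ into $|\lambda_j|\le 1$ and use $\sum_j y_{ij}^2=1$. Where you genuinely diverge is the equality analysis. The paper splits into two cases (all eigenvalues equal to $\pm1$, which it resolves as $K_2$; or vanishing $i$-th eigenvector entries off the $\pm1$ eigenspaces, which it merely \emph{asserts} ``is observed'' to give the star). You instead exploit the explicit normalized Perron eigenvector $\bigl(\sqrt{d_1/2m},\ldots,\sqrt{d_n/2m}\bigr)$ and its sign-flipped counterpart for $-1$ in the bipartite case, reducing equality to the arithmetic condition $d_i=m$ and hence to ``$v_i$ meets every edge,'' which characterizes the star centred at $v_i$ (with $K_2=K_{1,1}$ absorbed as a special case). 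This buys you an actual derivation of the necessity direction that the paper only gestures at, plus a clean unification of the paper's two cases; the cost is that you need the explicit eigenvectors, which the paper only introduces later (in the proof of its third bound). One small point to tighten: the cited symmetry theorem says the spectrum is symmetric about $0$ iff $G$ is bipartite; it does not by itself yield ``$-1\in\spec(R(G))$ only if $G$ is bipartite.'' That implication is a standard fact for the normalized adjacency matrix of a connected graph, but it deserves its own one-line justification (e.g., via the equality case of $|x^{T}Rx|\le x^{T}x$ forcing a sign-alternating eigenvector); note the paper leans on the same unproved implication in its proof of the lower bound $\mathcal{RE}_G(v_i)\ge\frac{1}{d_i}\sum_{i\sim j}\frac{1}{d_j}$, so you are in no worse shape than the authors.
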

\begin{proof}
Let $\lambda_1\geq\ldots\geq\lambda_n$ are eigenvalues of $R(G)$, then replacing each $|\lambda_i|$ by 1 (Randi\'c spectral radius) in Lemma \ref{Lemma definittion of vetrex energy}, we have
$$\mathcal{RE}_G(v_i)\leq\sum\limits_{j=1}^ny^{2}_{ij}=1.$$

For the equality we must have either $(i)$ $\lambda_i\in \{-1,1\}$, $\forall$ $i\in \{1,\ldots,n\}$ or $(ii)$ the $i^{th}$ entries of the eigenvectors corresponding to eigenvalues with $|\lambda_i|\neq 1$ are all zero. Since $G$ is connected, $\lambda_1=1$ is a simple eigenvalue of $R(G)$ and the fact that $\trace(R(G))=0$ implies that $\lambda_n=-1$ with multiplicity one in the first case. It can be seen that this requirement is uniquely satisfies by $K_2$. In the second case, it is observed that the corresponding graph is the star, with vertex $v_i$ being the center of the star.
\end{proof}

Using the Cauchy Schwarz Inequality, we have the following theorem.

\begin{theorem}\label{Randic vertex energy upper bound 1}
Let $G=(V,E)$ be a connected graph with at least two vertices, then $\forall$ $v_i\in V$,
$$\mathcal{RE}_G(v_i)\leq\sqrt{\frac{1}{d_i}\sum\limits_{i\sim j}\frac{1}{d_j}},$$
equality holds if and only if $G$ is a star graph, with vertex $v_i$ being the center of the star.
\end{theorem}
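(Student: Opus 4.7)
The plan is to combine the spectral representation from Lemma \ref{Lemma definittion of vetrex energy} with a single application of the Cauchy--Schwarz inequality, and then identify the resulting sum of squares as a diagonal entry of $R(G)^2$ via Lemma \ref{Lemma of iith entry of Randic higher power}.

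\textbf{Step 1: Set up Cauchy--Schwarz.} Write $\mathcal{RE}_G(v_i)=\sum_{j=1}^n y_{ij}^2|\lambda_j|$ and view the sum as $\sum_j (y_{ij})\cdot (y_{ij}|\lambda_j|)$. Then
\[
\mathcal{RE}_G(v_i)^2=\Bigl(\sum_{j=1}^n y_{ij}\cdot y_{ij}|\lambda_j|\Bigr)^2\leq\Bigl(\sum_{j=1}^n y_{ij}^2\Bigr)\Bigl(\sum_{j=1}^n y_{ij}^2\lambda_j^2\Bigr)=\sum_{j=1}^n y_{ij}^2\lambda_j^2,
\]
using the normalisation $\sum_j y_{ij}^2=1$ from the orthogonality of $Y$.

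\textbf{Step 2: Identify the spectral side with a matrix entry.} By Lemma \ref{Lemma of iith entry of Randic higher power} applied with $k=2$, $\sum_j y_{ij}^2\lambda_j^2=(R(G)^2)_{ii}$. Since $R(G)$ is real symmetric, $(R(G)^2)_{ii}=\sum_k R_{ik}^2$, and from the definition of $R(G)$, the only nonzero terms are those with $k\sim i$, each equal to $1/(d_id_k)$. Hence
\[
(R(G)^2)_{ii}=\sum_{k\sim i}\frac{1}{d_id_k}=\frac{1}{d_i}\sum_{i\sim j}\frac{1}{d_j},
\]
and taking square roots yields the stated bound.

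\textbf{Step 3: Analyse equality.} Equality in the Cauchy--Schwarz step requires $|\lambda_j|$ to be constant over the indices $j$ with $y_{ij}\neq 0$. Since the Randi\'c spectral radius is $1$ (Theorem \ref{Randic spectral radius}), any eigenvalue in this common-modulus set must have $|\lambda_j|\leq 1$; to achieve the bound it must in fact be $1$. So equality forces $y_{ij}=0$ for every $j$ with $|\lambda_j|\neq 1$. Combining with $\sum_j y_{ij}^2=1$, the row $i$ of $Y$ is supported only on the eigenvalues $\pm 1$. For a connected graph, $\lambda_1=1$ is simple (Theorem \ref{Randic spectral radius multiplicity}); $-1$ appears only if $G$ is bipartite (Theorem \ref{Randic eigenvalue symmetricity}), and when it does it is also simple. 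A case analysis, together with the explicit eigenvector of the Perron eigenvalue $1$ of $R(G)$ (whose components are proportional to $\sqrt{d_j}$) shows that the only way to have a vertex $v_i$ for which the row of $Y$ is supported exclusively on $\pm 1$ eigenspaces is that $G$ is the star $K_{1,n-1}$ and $v_i$ is its centre; conversely, a direct check confirms equality in that case.

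\textbf{Anticipated obstacle.} Steps 1 and 2 are routine. The delicate point is Step 3: one must rule out non-star bipartite graphs. The cleanest route is to use the explicit form of the Perron vector together with the bipartition-induced eigenvector for $-1$ to show that if both entries at $v_i$ are nonzero and all other eigenvectors vanish at $v_i$, then $v_i$ has only one neighbour structure compatible with the constraints, and the remaining vertex structure collapses to a star.
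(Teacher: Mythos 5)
Your argument is essentially the paper's: the same Cauchy--Schwarz splitting of $\sum_j y_{ij}\cdot y_{ij}|\lambda_j|$, the same identification of $\sum_j y_{ij}^2\lambda_j^2$ with $(R^2(G))_{ii}=\frac{1}{d_i}\sum_{i\sim j}\frac{1}{d_j}$ via Lemma \ref{Lemma of iith entry of Randic higher power}, and a comparable equality discussion (the paper is, if anything, less careful, asserting $|\lambda_j|$ constant for all $j$ rather than only on the support of row $i$). The one wobble in your Step 3 --- ``to achieve the bound it must in fact be $1$'' does not follow from Cauchy--Schwarz equality alone --- is repaired by the observation you make afterwards that the Perron eigenvector of $R(G)$ is strictly positive, so $y_{i1}\neq 0$ and the common modulus on the support is forced to equal $|\lambda_1|=1$.
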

\begin{proof}
Let $\lambda_1 \geq \cdots \geq \lambda_n$ are eigenvalues of $R(G)$. From the Cauchy Schwarz Inequality \cite{Bhatia1997}, we get
$$\left(\sum\limits_{j=1}^{n}y^2_{ij}|\lambda_{j}|\right)^2\leq \left(\sum\limits_{j=1}^{n}y^2_{ij}|\lambda_{j}|^2\right)\left(\sum\limits_{j=1}^{n}y^2_{ij}\right).$$
From Lemma \ref{Lemma of iith entry of Randic higher power}, $\sum\limits_{j=1}^{n}y^2_{ij}|\lambda_{j}|^2=\left(R^2(G)\right)_{ii}=\frac{1}{d_i}\sum\limits_{i\sim j}\frac{1}{d_j}$, also $\sum\limits_{j=1}^{n}y^2_{ij}=1$. Hence, from Lemma \ref{Lemma definittion of vetrex energy},
$$\left(\mathcal{RE}_G(v_i)\right)^2\leq \frac{1}{d_i}\sum\limits_{i\sim j}\frac{1}{d_j}.$$

For equality, $y_{ij}|\lambda_j|$ and $y_{ij}$ must be proportional, implying $|\lambda_j|$ is constant for all $j$. Since $1\in\spec(R(G))$ with multiplicity one, the remaining eigenvalues must be $-1$, also simple, to preserve the trace, which holds only for $K_2$. Another equality case arises when $|\lambda_1|=|\lambda_n|=1$, $|\lambda_j|=0$ along with $y_{ij}=0$ for $j=2,\ldots,n-1$. This occurs only for a star graph, with $v_i$ as the center.
\end{proof}

To obtain equality in the above theorem, we must have same absolute value for all non-zero eigenvalues. The next theorem, weakens this condition and identifies graphs for which equality holds.
\begin{theorem}
Let $G=(V,E)$ be a connected graph with $n\geq 2$ vertices and $m$ edges, then $\forall$ $v_i\in V$
$$\mathcal{RE}_G(v_i)\leq \frac{d_i}{2m}+\sqrt{\left(\frac{1}{d_i}\sum\limits_{i\sim j}\frac{1}{d_j}-\frac{d_i}{2m}\right)\left(1-\frac{d_i}{2m}\right)},$$
equality holds if $G\cong K_n$ and $G\cong S_n$, with $v_i$ is its center.
\end{theorem}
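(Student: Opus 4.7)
The bound has the characteristic shape $\alpha + \sqrt{(A-\alpha)(1-\alpha)}$ with $\alpha = d_i/(2m)$, which strongly suggests the strategy of peeling off the contribution from the Perron eigenvalue $\lambda_1 = 1$ (guaranteed by Theorem \ref{Randic spectral radius}) and then applying Cauchy--Schwarz only to the remaining eigenvalues. This relaxes the rigid condition from Theorem \ref{Randic vertex energy upper bound 1}, where \emph{all} nonzero $|\lambda_j|$ had to coincide; now only $|\lambda_2|,\ldots,|\lambda_n|$ need share the same magnitude on the support of the $i$-th row of $Y$.

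The plan is as follows. First, I would identify the Perron eigenvector of $R(G)$: a direct check shows $R(G)(\sqrt{d_1},\ldots,\sqrt{d_n})^t = (\sqrt{d_1},\ldots,\sqrt{d_n})^t$, so after normalization by $\sqrt{2m}$, the eigenvector associated to $\lambda_1 = 1$ has $i$-th entry $y_{i1} = \sqrt{d_i/(2m)}$, giving $y_{i1}^2 = d_i/(2m)$. By Lemma \ref{Lemma definittion of vetrex energy}, I then split
\begin{equation*}
\mathcal{RE}_G(v_i) = y_{i1}^2\,|\lambda_1| + \sum_{j=2}^{n} y_{ij}^2\,|\lambda_j| = \frac{d_i}{2m} + \sum_{j=2}^{n} y_{ij}^2\,|\lambda_j|.
\end{equation*}
To the tail sum I apply Cauchy--Schwarz in the form $\sum_{j=2}^{n} y_{ij}^2 |\lambda_j| \leq \bigl(\sum_{j=2}^n y_{ij}^2 \lambda_j^2\bigr)^{1/2}\bigl(\sum_{j=2}^n y_{ij}^2\bigr)^{1/2}$.

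Next I would evaluate the two factors. Using the row-sum identity $\sum_{j=1}^n y_{ij}^2 = 1$, the second factor equals $1 - d_i/(2m)$. For the first factor, Lemma \ref{Lemma of iith entry of Randic higher power} gives $\sum_{j=1}^{n} y_{ij}^2 \lambda_j^2 = (R^2(G))_{ii} = \tfrac{1}{d_i}\sum_{i\sim j}\tfrac{1}{d_j}$, so after subtracting the $j=1$ term I obtain $\sum_{j=2}^n y_{ij}^2 \lambda_j^2 = \tfrac{1}{d_i}\sum_{i\sim j}\tfrac{1}{d_j} - \tfrac{d_i}{2m}$. Substituting these into the Cauchy--Schwarz estimate yields exactly the advertised bound.

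Finally, for the equality analysis, Cauchy--Schwarz forces $|\lambda_j|$ to be constant on $\{\,j\geq 2 : y_{ij}\neq 0\,\}$. For $G\cong K_n$, the Randi\'c spectrum is $\{1,\, -\tfrac{1}{n-1},\ldots,-\tfrac{1}{n-1}\}$, so every $|\lambda_j|$ with $j\geq 2$ equals $\tfrac{1}{n-1}$ and equality is automatic. For $G\cong S_n$ with $v_i$ the center, the Randi\'c spectrum is $\{1,0,\ldots,0,-1\}$, and I would verify that the null-space eigenvectors vanish at the center (they live in the space of leaf-valued sequences summing to zero), so the only surviving $j\geq 2$ contribution comes from $\lambda_n = -1$ where the constancy condition is trivially met. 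The main subtle point will be this last verification that the central-vertex component of every zero-eigenvalue eigenvector of the star is zero; once this is in hand, plugging $\alpha = d_i/(2m) = (n-1)/(2(n-1)) = 1/2$ and $\tfrac{1}{d_i}\sum_{i\sim j}\tfrac{1}{d_j} = 1$ recovers equality numerically, serving as a consistency check.
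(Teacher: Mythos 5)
Your proposal is correct and follows essentially the same route as the paper: peel off the Perron contribution $y_{i1}^2|\lambda_1| = d_i/(2m)$ using the explicit eigenvector $(\sqrt{d_1},\ldots,\sqrt{d_n})^t/\sqrt{2m}$, then apply Cauchy--Schwarz to the remaining eigenvalues and evaluate the two factors via $\sum_j y_{ij}^2 = 1$ and $(R^2(G))_{ii}$. Your equality discussion for the star (checking that the null eigenvectors vanish at the center, plus the numerical consistency check) is in fact more careful than the paper's, which only asserts the two equality cases.
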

\begin{proof}
Let $\lambda_1 \geq \cdots \geq \lambda_n$ are eigenvalues of $R(G)$. From the Cauchy Schwarz Inequality, we have
\begin{align*}
&\left(\sum\limits_{j=1}^{n-1}y^2_{i(j+1)}|\lambda_{j+1}|\right)^2\leq \left(\sum\limits_{j=1}^{n-1}y^2_{i(j+1)}|\lambda_{j+1}|^2\right)\left(\sum\limits_{j=1}^{n-1}y^2_{i(j+1)}\right)\\
\Rightarrow &\left(\mathcal{RE}_G(v_i)-y^2_{i1}|\lambda_{1}|\right)^2\leq \left(\frac{1}{d_i}\sum\limits_{i\sim j}\frac{1}{d_j}-y^2_{i1}|\lambda_{1}|^2\right)\left(1-y^2_{i1}\right).
\end{align*}
Note that $\left(\frac{\sqrt{d_1}}{\sqrt{d_1+\cdots+d_n}},\ldots,\frac{\sqrt{d_n}}{\sqrt{d_1+\cdots+d_n}}\right)^T$ is an orthonormal eigenvector of $R(G)$ corresponding to the eigenvalue $\lambda_1=1$ and hence,
$$\mathcal{RE}_G(v_i)\leq \frac{d_i}{2m}+\sqrt{\left(\frac{1}{d_i}\sum\limits_{i\sim j}\frac{1}{d_j}-\frac{d_i}{2m}\right)\left(1-\frac{d_i}{2m}\right)}.$$

For equality, the magnitudes $|\lambda_{j}|$ have to be constant $\forall$ $j$ except $|\lambda_1|$. Clearly, the complete graph $K_n$ satisfies this criteria. Additionally, the star graph with vertex $v_i$ at the center is also a candidate for achieving the equality.
\end{proof}

The following corollary is a direct consequence of the above theorem.

\begin{corollary}
Let $G=(V,E)$ be a connected $k$-regular graph with $n$ vertices, then $\forall$ $v_i\in V$
$$\mathcal{RE}_G(v_i)\leq \frac{1}{n}+\sqrt{\left(\frac{1}{k}-\frac{1}{n}\right)\left(1-\frac{1}{n}\right)},$$
equality holds if and only if $G\cong K_n$.
\end{corollary}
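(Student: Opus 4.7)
The plan is to derive the corollary as a straight specialization of the preceding theorem to the $k$-regular setting, and then to establish the equality characterization. For a $k$-regular graph on $n$ vertices one has $d_i = k$ for every $i$ and $2m = nk$, so the terms in the bound simplify as
\begin{equation*}
\frac{d_i}{2m} = \frac{1}{n}, \qquad \frac{1}{d_i}\sum_{i \sim j}\frac{1}{d_j} = \frac{1}{k}\cdot\sum_{i\sim j}\frac{1}{k} = \frac{1}{k}\cdot\frac{d_i}{k} = \frac{1}{k}.
\end{equation*}
Plugging these into the inequality from the preceding theorem immediately yields the claimed upper bound.

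For the (\(\Leftarrow\)) direction of the equality statement, I would verify $K_n$ directly. Since $R(K_n) = \frac{1}{n-1}(J-I)$, its spectrum is $\{1, -\frac{1}{n-1}^{(n-1)}\}$, and by vertex-transitivity each vertex carries the same Randi\'c energy, giving
\begin{equation*}
\mathcal{RE}_{K_n}(v_i) = \frac{1}{n}\,\mathcal{RE}(K_n) = \frac{1}{n}\Bigl(1 + (n-1)\cdot\tfrac{1}{n-1}\Bigr) = \frac{2}{n}.
\end{equation*}
A short arithmetic check shows the right-hand side of the corollary also collapses to $\tfrac{2}{n}$ when $k = n-1$, confirming equality.

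For the ($\Rightarrow$) direction I would retrace the equality analysis from the proof of the preceding theorem. Equality in the Cauchy--Schwarz step there forces $|\lambda_j|$ to be a common constant, say $c$, for all $j = 2, \ldots, n$. For a $k$-regular graph we have $R(G) = \tfrac{1}{k}A(G)$, so the adjacency spectrum consists of $k$ (simple) together with eigenvalues of common absolute value $kc$. If $c = 0$, then $A(G)^2 = k^2 I$ implies $k = d_i = (A^2)_{ii} = k^2$, forcing $k = 1$, i.e., $G \cong K_2$, which is the complete graph for $n = 2$. Otherwise $A(G)$ has either two distinct eigenvalues $\{k, -kc\}$, in which case the standard minimal polynomial argument $(A - kI)(A + kcI) = 0$ forces every two non-adjacent vertices to have no common neighbor, making $G$ complete; or three distinct eigenvalues $\{k, kc, -kc\}$, which would make $G$ strongly regular with $r = -s$, equivalently $\lambda = \mu$. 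I would rule out the latter for a connected regular graph either by verifying it contradicts $\mathcal{RE}_G(v_i) = \frac{1}{n} + \sqrt{\left(\frac{1}{k}-\frac{1}{n}\right)\left(1-\frac{1}{n}\right)}$ via a dimension/multiplicity count (the multiplicities $p, q$ of $\pm c$ satisfy $(q-p)c = 1$ and $pc^2 + qc^2 = 1 - 1/k$, overdetermining the system).

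The genuine obstacle is this last step of excluding a hypothetical strongly-regular graph with $r = -s$; the bulk of the corollary is routine substitution, but a careful argument is needed to show that the equality case collapses only to $K_n$ among connected $k$-regular graphs rather than admitting an exotic strongly-regular example.
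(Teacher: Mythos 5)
Your derivation of the inequality is exactly the paper's proof: the paper's entire argument is the observation that $2m=nk$ for a $k$-regular graph, so that $\tfrac{d_i}{2m}=\tfrac1n$ and $\tfrac{1}{d_i}\sum_{i\sim j}\tfrac{1}{d_j}=\tfrac1k$, and your verification that $K_n$ attains the bound (via $\mathcal{RE}_{K_n}(v_i)=2/n$) is also correct. The paper does not attempt the ``only if'' direction at all, and this is where your proposal runs into trouble --- but the trouble is not a fixable gap in your argument; it is that the stated equality characterization is false. You correctly reduce the equality case to: $G$ connected $k$-regular with adjacency spectrum $\{k\}\cup\{\pm\theta\}$, i.e.\ either $G=K_n$ or $G$ a connected strongly regular graph with $\lambda=\mu$. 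Your hope of excluding the latter by a multiplicity count fails on two levels. First, the count is not overdetermined: writing $p,q$ for the multiplicities of $\theta/k$ and $-\theta/k$ in the Randi\'c spectrum, the constraints are $p+q=n-1$, $(q-p)\theta=k$ (trace zero) and $(p+q)\theta^2=k(n-k)/(n-1)\cdot\frac{1}{1}$ --- three equations in three unknowns, generically solvable. (Also note your equation $pc^2+qc^2=1-1/k$ is incorrect; $\operatorname{trace}(R^2)=n/k$ gives $(p+q)c^2=(n-k)/k$.) Second, such graphs actually exist and attain equality. Take the $4\times4$ rook's graph $L(K_{4,4})=K_4\,\square\,K_4$, strongly regular with parameters $(16,6,2,2)$ and adjacency spectrum $\{6,2^{(6)},(-2)^{(9)}\}$, hence Randi\'c spectrum $\{1,(\tfrac13)^{(6)},(-\tfrac13)^{(9)}\}$. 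Since every nontrivial eigenvalue has absolute value $\tfrac13$, the Cauchy--Schwarz step is an equality, and indeed by vertex-transitivity
\begin{equation*}
\mathcal{RE}_G(v_i)=\frac{1}{16}\Bigl(1+15\cdot\tfrac13\Bigr)=\frac{3}{8}=\frac{1}{16}+\sqrt{\Bigl(\tfrac16-\tfrac1{16}\Bigr)\Bigl(1-\tfrac1{16}\Bigr)},
\end{equation*}
so equality holds although $G\not\cong K_{16}$ (the Shrikhande graph, cospectral with it, gives a second counterexample).

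So: your proof of the inequality and of the ``if'' direction is complete and coincides with the paper's; your instinct that the strongly regular case is ``the genuine obstacle'' is exactly right, but the obstacle is insurmountable because the corollary's ``only if'' claim is wrong as stated. The correct equality class among connected $k$-regular graphs is $K_n$ together with the connected strongly regular graphs having $\lambda=\mu$ (equivalently, spectrum $\{k,\theta,-\theta\}$). You should either weaken the conclusion to ``equality holds if $G\cong K_n$'' (matching the paper's own phrasing in the parent theorem) or state the full characterization.
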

\begin{proof}
The proof is based on the fact that in a $k$-regular graph, there are $\frac{nk}{2}$ edges.
\end{proof}

\begin{theorem}\label{Randic vertex energy lower bound 1}
Let $G=(V,E)$ be a connected graph with at least two vertices, then $\forall$ $v_i\in V$,
$$\mathcal{RE}_G(v_i)\geq\frac{1}{d_i}\sum\limits_{i\sim j}\frac{1}{d_j},$$
equality holds if and only if $G$ is a complete bipartite graph $K_{n_1,n_2}$.
\end{theorem}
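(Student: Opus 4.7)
The main idea is to exploit the spectral bound $|\lambda_j|\le 1$ from Theorem \ref{Randic spectral radius}, which is equivalent to $\lambda_j^2\le|\lambda_j|$. Combining this pointwise inequality with the two identities supplied by the preliminaries, namely $\mathcal{RE}_G(v_i)=\sum_j y_{ij}^2|\lambda_j|$ from Lemma \ref{Lemma definittion of vetrex energy} and $(R^2(G))_{ii}=\sum_j y_{ij}^2\lambda_j^2$ from Lemma \ref{Lemma of iith entry of Randic higher power}, immediately yields the inequality. So the plan is: first weight $\lambda_j^2\le|\lambda_j|$ by the nonnegative coefficients $y_{ij}^2$ and sum over $j$, then invoke the two lemmas on either side, and finally identify $(R^2(G))_{ii}$ combinatorially via $(R^2(G))_{ii}=\sum_{k}R_{ik}R_{ki}=\sum_{k\sim i}\tfrac{1}{d_id_k}=\tfrac{1}{d_i}\sum_{i\sim j}\tfrac{1}{d_j}$. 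This produces the claimed lower bound in one line of algebra.

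The substantive part is the equality discussion, which I expect to be the main obstacle. Tracing the inequality back, equality at $v_i$ forces $y_{ij}^2\,|\lambda_j|(1-|\lambda_j|)=0$ for every $j$, i.e.\ whenever $y_{ij}\neq 0$ one must have $|\lambda_j|\in\{0,1\}$. To conclude that this characterizes complete bipartite graphs, I would read the condition across all vertices $v_i$: since the orthogonality of $Y$ guarantees that for every $j$ there exists some $i$ with $y_{ij}\neq 0$, every eigenvalue of $R(G)$ must lie in $\{-1,0,1\}$.

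From this spectral constraint I would finish as follows. Connectedness together with Theorem \ref{Randic spectral radius multiplicity} makes $\lambda_1=1$ a simple eigenvalue, and $\trace(R(G))=0$ then forces the value $-1$ to occur with the same total signed mass, hence with multiplicity exactly one. Thus the Randi\'c spectrum is $\{1,0^{(n-2)},-1\}$, so $\rank(R(G))=2$. Since $R(G)=D^{-1/2}A(G)D^{-1/2}$ and $D$ is nonsingular, $\rank(A(G))=2$ as well; it is a standard fact that a connected graph with adjacency rank two is exactly a complete bipartite graph $K_{n_1,n_2}$ (write $A=\begin{pmatrix}0 & B\\ B^t & 0\end{pmatrix}$, observe $\rank(B)=1$, and use that a $0/1$ matrix of rank one with no zero row or column is the all-ones matrix). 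The reverse implication is a direct computation: the Randi\'c eigenvalues of $K_{n_1,n_2}$ are $1,-1$ together with $n-2$ zeros, so $\lambda_j^2=|\lambda_j|$ for every $j$ and the bound is tight at each vertex.
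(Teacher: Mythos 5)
Your proposal is correct and follows essentially the same route as the paper: the pointwise bound $\lambda_j^2\le|\lambda_j|$ weighted by $y_{ij}^2$, the identification $(R^2(G))_{ii}=\frac{1}{d_i}\sum_{i\sim j}\frac{1}{d_j}$, and the equality analysis via spectrum contained in $\{-1,0,1\}$ with $1$ and $-1$ simple, forcing $\rank(R(G))=2$ and hence $K_{n_1,n_2}$. If anything, your treatment of the equality case is slightly more careful than the paper's, since you note explicitly that equality at a single vertex only constrains the eigenvalues whose eigenvectors have nonzero $i$-th entry.
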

\begin{proof}
Let $\lambda_1\geq\ldots\geq\lambda_n$ be the eigenvalues of $R(G)$, then $-1\leq\lambda_i\leq 1$ $\forall$ $i\in \{1,\ldots,n\}$, which implies $|\lambda_i|\geq \lambda_i^2$. Therefore
$$\mathcal{RE}_G(v_i)=\sum\limits_{j=1}^ny^{2}_{ij}|\lambda_j|\geq \sum\limits_{j=1}^ny^{2}_{ij}\lambda_j^2=\phi_i(R^2)=\frac{1}{d_i}\sum\limits_{i\sim j}\frac{1}{d_j}. $$
The equality holds only when $\lambda_i\in \{-1,0,1\}$ $\forall$ $i\in \{1,\ldots,n\}$. Since 1 is always an eigenvalue of the Randi\'c matrix $R(G)$ with multiplicity one, we seek graphs whose remaining eigenvalues belong to the set $\{-1,0\}$. Suppose, $-1$ is not an eigenvalue of $R(G)$. Then, it contradicts $\trace(R(G))=0$. Therefore, $-1$ also be an eigenvalue, and it must occur with multiplicity one, which implies the graph $G$ must be bipartite. Hence, we are left with a bipartite graph $G$ with $\rank(R(G))=2$. The only graph satisfying these conditions is complete bipartite graph $K_{n_1,n_2}$.
\end{proof}

\begin{corollary}
Among the connected graphs, the pendant vertices of a star have the minimum Randi\'c energy of a vertex.
\end{corollary}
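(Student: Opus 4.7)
The plan is to use Theorem \ref{Randic vertex energy lower bound 1} as the main tool, and then to estimate its right-hand side uniformly. Fix a connected graph $G$ on $n$ vertices and any vertex $v_i$. By Theorem \ref{Randic vertex energy lower bound 1},
$$\mathcal{RE}_G(v_i) \;\geq\; \frac{1}{d_i}\sum_{i\sim j}\frac{1}{d_j}.$$
Since every vertex in a graph on $n$ vertices has degree at most $n-1$, each summand satisfies $\tfrac{1}{d_j}\geq \tfrac{1}{n-1}$, and the sum has $d_i$ terms, so
$$\mathcal{RE}_G(v_i)\;\geq\;\frac{1}{d_i}\cdot \frac{d_i}{n-1}\;=\;\frac{1}{n-1}.$$
This shows $\tfrac{1}{n-1}$ is a universal lower bound across all vertices of all connected graphs on $n$ vertices.

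Next I would verify that this bound is actually attained by a pendant vertex of the star $S_n$. For such a vertex $v$, we have $d_v=1$, and its unique neighbor (the center) has degree $n-1$, so the Theorem \ref{Randic vertex energy lower bound 1} estimate evaluates to $\tfrac{1}{n-1}$. Because $S_n = K_{1,n-1}$ is complete bipartite, the equality case of Theorem \ref{Randic vertex energy lower bound 1} applies and gives $\mathcal{RE}_{S_n}(v)=\tfrac{1}{n-1}$. Combining this with the universal bound above yields the corollary.

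If one wants uniqueness, one can chase the equality conditions: equality in Theorem \ref{Randic vertex energy lower bound 1} forces $G=K_{n_1,n_2}$ with $n_1+n_2=n$, and equality in the degree estimate forces every neighbor of $v_i$ to have degree $n-1$. For a vertex in the part of size $n_1$ of $K_{n_1,n_2}$, its neighbors have degree $n_1$, so $n_1=n-1$ and hence $n_2=1$, identifying $G$ as $S_n$ with $v_i$ a pendant vertex. There is no real obstacle here; the result is essentially a direct consequence of Theorem \ref{Randic vertex energy lower bound 1} combined with the trivial degree bound $d_j\leq n-1$, so the writeup will be short.
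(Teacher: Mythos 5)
Your proposal is correct and follows essentially the same route as the paper: apply Theorem \ref{Randic vertex energy lower bound 1} and minimize its right-hand side by bounding every neighbour degree by $n-1$, then check that a pendant vertex of $S_n$ attains the resulting value $\tfrac{1}{n-1}$. You simply spell out the attainment and uniqueness steps more explicitly than the paper's one-line argument does.
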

\begin{proof}
The result can be obtained from Theorem \ref{Randic vertex energy lower bound 1} by minimizing the term $\frac{1}{d_i}\sum\limits_{i\sim j}\frac{1}{d_j}$ i.e., by replacing the degree of the neighbourhood vertices of $v_i$ by $n-1$ and hence equality holds for the pendent vertices of a star graph.
\end{proof}

Now, from the H\"{o}lder inequality \cite{Bhatia1997}, we have the following lower bound.

\begin{theorem}
Let $G=(V,E)$ be a connected graph with at least two vertices, then $\forall$ $v_i\in V$
$$\mathcal{RE}_G(v_i)\geq\frac{\left(\frac{1}{d_i}\sum\limits_{i\sim j}\frac{1}{d_j}\right)^{\frac{3}{2}}}{\left(\sum\limits_{j=1}^n\frac{1}{d_id_j}\left(\sum\limits_{k\sim i \& j}\frac{1}{d_k}\right)^2\right)^{\frac{1}{2}}}.$$
\end{theorem}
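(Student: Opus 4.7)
The plan is to view the Lemma \ref{Lemma definittion of vetrex energy} representation as an expectation against the probability weights $\mu_j \coloneqq y_{ij}^2$ (which sum to $1$), so that $\mathcal{RE}_G(v_i) = \sum_j \mu_j |\lambda_j|$ is the first moment of $|\lambda|$ under $\mu$, while Lemma \ref{Lemma of iith entry of Randic higher power} gives $\phi_i(R^k) = \sum_j \mu_j \lambda_j^k$ as the $k$-th moment. The desired inequality is then just the log-convexity of moments, specifically the instance
$$\bigl(\mathbb{E}_\mu\,\lambda^2\bigr)^{3/2} \le \mathbb{E}_\mu |\lambda|\;\bigl(\mathbb{E}_\mu\,\lambda^4\bigr)^{1/2},$$
which I would derive directly from H\"older's inequality.

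First I would write $y_{ij}^2 \lambda_j^2 = \bigl(y_{ij}^{4/3}|\lambda_j|^{2/3}\bigr)\bigl(y_{ij}^{2/3}\lambda_j^{4/3}\bigr)$ and apply H\"older with conjugate exponents $p = 3/2$ and $q = 3$. The $p$-th power of the first factor is $y_{ij}^2|\lambda_j|$ and the $q$-th power of the second factor is $y_{ij}^2\lambda_j^4$, so the inequality collapses to
$$\sum_{j=1}^n y_{ij}^2 \lambda_j^2 \;\le\; \Bigl(\sum_{j=1}^n y_{ij}^2 |\lambda_j|\Bigr)^{2/3}\Bigl(\sum_{j=1}^n y_{ij}^2 \lambda_j^4\Bigr)^{1/3}.$$
Raising both sides to the $3/2$ power and rearranging yields
$$\mathcal{RE}_G(v_i) \;\ge\; \frac{\bigl(\phi_i(R^2)\bigr)^{3/2}}{\bigl(\phi_i(R^4)\bigr)^{1/2}}.$$

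The remaining step is the combinatorial identification of the two moments. For the numerator, the identity $\phi_i(R^2) = (R^2)_{ii} = \frac{1}{d_i}\sum_{i\sim j}\frac{1}{d_j}$ was already used in the proof of Theorem \ref{Randic vertex energy upper bound 1}. For the denominator I would compute $(R^2)_{ij} = \frac{1}{\sqrt{d_id_j}}\sum_{k\sim i\,\&\,j}\frac{1}{d_k}$ entry-wise, and then, using that $R$ is Hermitian,
$$\phi_i(R^4) = (R^4)_{ii} = \sum_{j=1}^n \bigl((R^2)_{ij}\bigr)^2 = \sum_{j=1}^n \frac{1}{d_id_j}\Bigl(\sum_{k\sim i\,\&\,j}\frac{1}{d_k}\Bigr)^2,$$
which is exactly the radicand in the claimed denominator. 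Substituting gives the stated bound.

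I do not anticipate a genuine obstacle; the one subtle point is choosing the interpolation parameter so that the exponent bookkeeping ($4 - 3\theta = 2$ with $\theta = 2/3$) lines up to produce moments of orders $1$ and $4$ on the right-hand side and order $2$ on the left. Once that is fixed, both the H\"older step and the $R^4$ diagonal computation are routine.
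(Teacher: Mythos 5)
Your proposal is correct and is essentially the paper's own argument: the paper applies the H\"older inequality to $\phi_i\bigl(|R(G)|^{4/3}\cdot|R(G)|^{2/3}\bigr)$ with exponents $p=3$, $q=3/2$ at the functional level, which after diagonalization is exactly your scalar H\"older step interpolating the second moment of $|\lambda|$ under the weights $y_{ij}^2$ between the first and fourth moments, and the identifications $\phi_i(R^2)=\frac{1}{d_i}\sum_{i\sim j}\frac{1}{d_j}$ and $\phi_i(R^4)=\sum_{j}\frac{1}{d_id_j}\bigl(\sum_{k\sim i\,\&\,j}\frac{1}{d_k}\bigr)^2$ are the same in both. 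Your version is merely a more explicit, elementary rendering of the same computation.
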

\begin{proof}
We have $|R(G)|^2=|R(G)|^{\frac{4}{3}}\cdot|R(G)|^{\frac{2}{3}}$, and hence from the H\"{o}lder inequality,
$$\phi_i(|R(G)|^2)\leq \left(\phi_i\left(|R(G)|^{\frac{4}{3}}\right)^p \right)^{\frac{1}{p}} \left(\phi_i\left(|R(G)|^{\frac{2}{3}}\right)^q \right)^{\frac{1}{q}},$$
where $\frac{1}{p}+\frac{1}{q}=1$. In particular taking $p=3$, $q=\frac{3}{2}$ and rearranging, we have
$$\left(\phi_i(|R(G)|^2)\right)^\frac{3}{2}\leq \left(\phi_i\left(|R(G)|^4\right) \right)^{\frac{1}{2}} \mathcal{RE}_G(v_i).$$
Since $\phi_i(|R(G)|^2)=\phi_i(R^2(G))=\frac{1}{d_i}\sum\limits_{i\sim j}\frac{1}{d_j}$ and $\phi_i(|R(G)|^4)=\phi_i(R^4(G))=\sum\limits_{j=1}^n\frac{1}{d_id_j}\left(\sum\limits_{k\sim i \& j}\frac{1}{d_k}\right)^2$, the inequality follows.
\end{proof}

In \cite{Arizmendi2021}, the authors prove that the product of vertex energy of two adjacent vertices is greater than or equal to 1. Using Lemma \ref{Lemma definittion of vetrex energy}, we have the following result for the Randi\'c energy using a similar argument.

\begin{theorem}
Let $G=(V,E)$ be a connected graph. If $v_i$ and $v_j$ are two adjacent vertices of $G$, then
$$\mathcal{RE}_G(v_i)\mathcal{RE}_G(v_j)\geq\frac{1}{d_id_j}.$$
\end{theorem}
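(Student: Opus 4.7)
The plan is to combine the spectral representation from Lemma \ref{Lemma definittion of vetrex energy} with the Cauchy--Schwarz inequality, and then exploit the fact that the $(i,j)$-entry of $R(G)$ is explicit when $v_i\sim v_j$.

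First I would write, using Lemma \ref{Lemma definittion of vetrex energy},
$$\mathcal{RE}_G(v_i)=\sum_{k=1}^n y_{ik}^2|\lambda_k|,\qquad \mathcal{RE}_G(v_j)=\sum_{k=1}^n y_{jk}^2|\lambda_k|,$$
and then apply Cauchy--Schwarz to the vectors with components $|y_{ik}|\sqrt{|\lambda_k|}$ and $|y_{jk}|\sqrt{|\lambda_k|}$ to obtain
$$\mathcal{RE}_G(v_i)\,\mathcal{RE}_G(v_j)\geq\left(\sum_{k=1}^n |y_{ik}y_{jk}|\,|\lambda_k|\right)^{2}.$$

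Next I would bound the right-hand sum from below by the triangle inequality:
$$\sum_{k=1}^n |y_{ik}y_{jk}|\,|\lambda_k|\geq\left|\sum_{k=1}^n y_{ik}\lambda_k y_{jk}\right|.$$
The inner sum is precisely $(Y\Lambda Y^{t})_{ij}=R(G)_{ij}$, and since $v_i\sim v_j$ we have $R(G)_{ij}=\frac{1}{\sqrt{d_i d_j}}$. Squaring then yields the desired inequality
$$\mathcal{RE}_G(v_i)\,\mathcal{RE}_G(v_j)\geq\frac{1}{d_i d_j}.$$

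The whole argument is only two inequalities glued together, so I do not anticipate a real obstacle; the only conceptual point is recognizing that the quantity arising from Cauchy--Schwarz, $\sum_k y_{ik}y_{jk}\lambda_k$, is exactly the off-diagonal entry $R(G)_{ij}$, which is the place where the adjacency hypothesis $v_i\sim v_j$ enters and produces the $\frac{1}{\sqrt{d_i d_j}}$ factor. This mirrors the argument of \cite{Arizmendi2021} for the adjacency matrix, where the corresponding entry equals $1$ and so the bound becomes $\mathcal{E}_G(v_i)\mathcal{E}_G(v_j)\geq 1$.
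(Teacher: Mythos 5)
Your proof is correct and is essentially the paper's own argument: both apply Cauchy--Schwarz to vectors built from $y_{ik}\sqrt{|\lambda_k|}$ and identify the resulting cross term with $R(G)_{ij}=\frac{1}{\sqrt{d_id_j}}$. The only cosmetic difference is that the paper absorbs the signs $\operatorname{sgn}(\lambda_k)$ into the second vector so the inner product equals $R(G)_{ij}$ exactly, whereas you work with absolute values and add a triangle-inequality step.
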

\begin{proof}
We have $R(G)=Y\Lambda Y^*$, where $Y \coloneqq(y_{ij})$ is the orthogonal matrix whose columns are given by the eigenvectors of $R(G)$ and $\Lambda\coloneqq \text{diag}\left(\lambda_1,\ldots,\lambda_n\right)$. Consider two vectors $x_1=(y_{i1}\sqrt{|\lambda_1|},\ldots,y_{in}\sqrt{|\lambda_n|})$ and $x_2=(sgn(\lambda_1)y_{j1}\sqrt{|\lambda_1|},\ldots,sgn(\lambda_n)y_{jn}\sqrt{|\lambda_n|})$. If $v_i$ and $v_j$ are adjacent vertices, then
$$\langle x_1,x_2\rangle^2=\left(\sum\limits_{k=1}^ny_{ik}\lambda_ky_{jk}\right)^2=\left((R(G))_{ij}\right)^2=\frac{1}{d_id_j}.$$
Also, $||x_1||^2=\sum\limits_{k}y^{2}_{ik}|\lambda_k|=\mathcal{RE}_G(v_i)$ and $||x_2||^2=\sum\limits_{k}y^{2}_{jk}|\lambda_k|=\mathcal{RE}_G(v_j)$. By Cauchy-schwarz inequality,
\begin{align*}
&\mathcal{RE}_G(v_i)\mathcal{RE}_G(v_j)=||x_1||^2||x_2||^2\geq\langle x_1,x_2\rangle^2=\frac{1}{d_id_j}.\qedhere
\end{align*}
\end{proof}

It is observed that the Randi\'c energy of the entire graph can be effectively estimated from the energies of individual vertices. From Theorems \ref{Randic vertex energy upper bound 1} and \ref{Randic vertex energy lower bound 1}, we can obtain the Randi\'c energy of the graph $G$. Let us first recall the definition of general Randi\'c index, $R^{(\alpha)}(G)$ defined by Bollob\'as et al. \cite{Bollobas1999} in 1999, and it is defined as $$R^{(\alpha)}(G)=\sum\limits_{i\sim j}(d_i d_j)^\alpha.$$

\begin{corollary}
Let $G$ be a connected graph with $n$ vertices, then
$$2R^{(-1)}(G)\leq\mathcal{RE}(G)\leq \sum\limits_{i=1}^n\sqrt{\frac{1}{d_i}\sum\limits_{i\sim j}\frac{1}{d_j}},$$
the lower bound attains if and only if $G\cong K_{n_1,n_2}$ and the upper bound attains if and only if $v_i$ is the central vertex of the star graph.
\end{corollary}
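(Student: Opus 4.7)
The plan is to apply the vertex-wise bounds from Theorems \ref{Randic vertex energy upper bound 1} and \ref{Randic vertex energy lower bound 1} to each $v_i \in V(G)$ and sum over $i$, invoking the decomposition
$$\mathcal{RE}(G)=\sum_{i=1}^{n}\mathcal{RE}_G(v_i)$$
that was established in the Preliminaries.

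For the upper bound, summing the inequality $\mathcal{RE}_G(v_i)\le\sqrt{\frac{1}{d_i}\sum_{i\sim j}\frac{1}{d_j}}$ from Theorem \ref{Randic vertex energy upper bound 1} over all $i$ gives the right-hand side directly, with no further manipulation needed. For the lower bound, Theorem \ref{Randic vertex energy lower bound 1} yields
$$\mathcal{RE}(G)\ge\sum_{i=1}^{n}\frac{1}{d_i}\sum_{i\sim j}\frac{1}{d_j}=\sum_{i=1}^{n}\sum_{j:\,i\sim j}\frac{1}{d_id_j}.$$
The main (and only non-routine) step is to recognise this as an edge sum: each unordered edge $\{i,j\}\in E(G)$ appears once in the inner sum indexed by $i$ and once in the inner sum indexed by $j$, each time contributing $\frac{1}{d_id_j}$. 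Collecting by edges,
$$\sum_{i=1}^{n}\sum_{j:\,i\sim j}\frac{1}{d_id_j}=2\sum_{i\sim j}\frac{1}{d_id_j}=2R^{(-1)}(G),$$
by the definition of the general Randi\'c index with $\alpha=-1$ recalled just before the corollary.

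The equality cases are inherited from the two underlying theorems. For the lower bound, the sum attains equality precisely when equality holds in every term of Theorem \ref{Randic vertex energy lower bound 1}; since the cited iff-condition is the single global requirement $G\cong K_{n_1,n_2}$, both characterisations coincide. For the upper bound, the equality condition is read off Theorem \ref{Randic vertex energy upper bound 1}, namely that the relevant vertex be the centre of a star, matching the statement of the corollary. No further structural work is required beyond this.
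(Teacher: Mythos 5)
Your proposal is correct and follows exactly the route the paper takes: sum the vertex-wise bounds of Theorems \ref{Randic vertex energy upper bound 1} and \ref{Randic vertex energy lower bound 1} over all vertices using $\mathcal{RE}(G)=\sum_i\mathcal{RE}_G(v_i)$, and identify $\sum_{i=1}^n\frac{1}{d_i}\sum_{i\sim j}\frac{1}{d_j}$ with $2R^{(-1)}(G)$ by counting each edge twice --- the paper's entire proof is precisely this last identity. Your write-up is in fact more detailed than the paper's one-line argument, including the (equally informal) discussion of the equality cases.
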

\begin{proof}
The proof is straightforward, since $R^{(-1)}(G)=\frac{1}{2}\sum\limits_{i=1}^n\left(\frac{1}{d_i}\sum\limits_{i\sim j}\frac{1}{d_j}\right)$.
\end{proof}

It is easy to show that the Randi\'c energy of a bipartite graph is distributed equally between the two partitions, which can be proved with the help of the block structure of the Randi\'c matrix, as shown in the following lemma.

\begin{lemma}\label{lemma Randic vertex energy bipartite}
Let $G=(V,E)$ be a bipartite graph with two partitions $V_1$ and $V_2$. Then the total Randi\'c energy of $G$ is equally distributed between the two vertex partitions. i.e.,
$$\sum\limits_{v\in V_1}\mathcal{RE}_G(v)=\sum\limits_{v\in V_2}\mathcal{RE}_G(v)=\frac{1}{2}\mathcal{RE}(G).$$
\end{lemma}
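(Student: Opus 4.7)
The plan is to exploit the block structure of the Randi\'c matrix of a bipartite graph. Order the vertices so that those in $V_1$ come first and those in $V_2$ come second. Since $G$ is bipartite, all edges go between $V_1$ and $V_2$, so
\[
R(G) = \begin{pmatrix} 0 & B \\ B^{*} & 0 \end{pmatrix},
\]
where $B$ is the $|V_1| \times |V_2|$ block whose $(i,j)$ entry is $1/\sqrt{d_i d_j}$ when $v_i \in V_1$ is adjacent to $v_j \in V_2$, and $0$ otherwise.

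Next, I would compute $R(G) R(G)^{*}$ in block form, which gives
\[
R(G) R(G)^{*} = \begin{pmatrix} BB^{*} & 0 \\ 0 & B^{*}B \end{pmatrix}.
\]
Because this is block diagonal with positive semidefinite blocks, its principal square root is also block diagonal; concretely,
\[
|R(G)| = \begin{pmatrix} (BB^{*})^{1/2} & 0 \\ 0 & (B^{*}B)^{1/2} \end{pmatrix}.
\]
Consequently, reading off the diagonal entries,
\[
\sum_{v \in V_1} \mathcal{RE}_G(v) = \trace\bigl((BB^{*})^{1/2}\bigr), \qquad \sum_{v \in V_2} \mathcal{RE}_G(v) = \trace\bigl((B^{*}B)^{1/2}\bigr).
\]

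The key observation I would then invoke is that $BB^{*}$ and $B^{*}B$ share the same non-zero eigenvalues with the same multiplicities (they are the squared singular values of $B$), so their positive square roots have identical non-zero spectra, hence equal traces. This yields the two partial sums being equal, and since together they add up to $\trace(|R(G)|) = \mathcal{RE}(G)$, each must equal $\tfrac{1}{2}\mathcal{RE}(G)$.

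I do not anticipate a genuine obstacle here; the only point that deserves care is justifying that the square root of the block diagonal matrix $RR^{*}$ is computed block-by-block, which follows from uniqueness of the positive semidefinite square root together with the fact that the block diagonal matrix whose blocks are $(BB^{*})^{1/2}$ and $(B^{*}B)^{1/2}$ is itself positive semidefinite and squares to $RR^{*}$. An alternative route, which I would mention in passing, uses Theorem \ref{Randic eigenvalue symmetricity}: the symmetry of the spectrum of $R(G)$ about $0$ pairs each eigenvector for $\lambda$ with one for $-\lambda$ having components negated on $V_2$, and Lemma \ref{Lemma definittion of vetrex energy} then directly equates the two partition sums.
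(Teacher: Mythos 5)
Your proposal is correct and follows essentially the same route as the paper: write $R(G)$ in bipartite block form, observe that $RR^{*}$ is block diagonal so $|R(G)|$ is too, and equate the traces of $(BB^{*})^{1/2}$ and $(B^{*}B)^{1/2}$. You actually supply slightly more justification than the paper does (uniqueness of the positive semidefinite square root and the singular-value argument for the equality of traces), which is a welcome addition rather than a deviation.
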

\begin{proof}
Let us consider
$$|R(G)|=\sqrt{\left(
  \begin{array}{c|c}
    0 & X \\
    \hline
    X^t & 0 \\
  \end{array}
\right)\left(
  \begin{array}{c|c}
    0 & X \\
    \hline
    X^t & 0 \\
  \end{array}
\right)^*}=\left(
  \begin{array}{c|c}
    \left(XX^t\right)^{\frac{1}{2}} & 0 \\
    \hline
    0& \left(X^tX\right)^{\frac{1}{2}} \\
  \end{array}
\right).$$
Now, $trace\left(\left(XX^t\right)^{\frac{1}{2}}\right)=\sum\limits_{v_i\in V_1}|R(G)|_{ii}=\sum\limits_{v_i\in V_1}\mathcal{RE}_G(v_i)$ and $trace\left(\left(X^tX\right)^{\frac{1}{2}}\right)=\sum\limits_{v_i\in V_2}|R(G)|_{ii}=\sum\limits_{v_i\in V_2}\mathcal{RE}_G(v_i)$. Note that $\text{trace}\left(XX^t\right)^{\frac{1}{2}}=\text{trace}\left(X^tX\right)^{\frac{1}{2}}$, and hence the result.
\end{proof}

\begin{corollary}
Let $G$ be a bipartite graph with two partitions $V_1$ and $V_2$ with $n_1$ and $n_2$ vertices respectively, then
$$\sum\limits_{v_i\in V_1}\left(\frac{1}{d_i}\sum\limits_{i\sim j}\frac{1}{d_j}\right)\leq \frac{\mathcal{RE}(G)}{2}\leq \sum\limits_{v_i\in V_1}\sqrt{\frac{1}{d_i}\sum\limits_{i\sim j}\frac{1}{d_j}},$$
the lower bound attains if and only if $G\cong K_{n_1,n_2}$ and the upper bound attains if and only if $v_i$ is the central vertex of the star graph.
\end{corollary}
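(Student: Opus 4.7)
The plan is to combine the bipartite splitting identity of Lemma \ref{lemma Randic vertex energy bipartite} with the vertex-wise inequalities of Theorems \ref{Randic vertex energy lower bound 1} and \ref{Randic vertex energy upper bound 1}. By Lemma \ref{lemma Randic vertex energy bipartite}, since $G$ is bipartite with parts $V_1$ and $V_2$, we have $\frac{\mathcal{RE}(G)}{2}=\sum_{v_i\in V_1}\mathcal{RE}_G(v_i)$, so it suffices to bound the right-hand side from above and below term by term.

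For the lower bound, I would apply Theorem \ref{Randic vertex energy lower bound 1} to each vertex $v_i\in V_1$ to obtain $\mathcal{RE}_G(v_i)\geq \frac{1}{d_i}\sum_{i\sim j}\frac{1}{d_j}$, and then sum over $v_i\in V_1$. For the upper bound, I would similarly apply Theorem \ref{Randic vertex energy upper bound 1} to each vertex $v_i\in V_1$ to get $\mathcal{RE}_G(v_i)\leq \sqrt{\frac{1}{d_i}\sum_{i\sim j}\frac{1}{d_j}}$, and sum.

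The more delicate part is the equality analysis. For the lower bound, the summed inequality is sharp iff every vertex-wise inequality is sharp, and Theorem \ref{Randic vertex energy lower bound 1} characterizes this as $G\cong K_{n_1,n_2}$; since $K_{n_1,n_2}$ is already bipartite with the given partition, this is consistent. For the upper bound, Theorem \ref{Randic vertex energy upper bound 1} forces each $v_i\in V_1$ attaining equality to be the center of a star. The point I would stress is that a star has a unique center, so to have sharpness at every $v_i\in V_1$ simultaneously one must have $|V_1|=1$, with that single vertex playing the role of the star's center and $V_2$ consisting of the pendants; this is precisely the interpretation ``$v_i$ is the central vertex of the star graph'' in the statement.

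The main obstacle is keeping the equality discussion clean, in particular ruling out spurious equality configurations in the upper bound. If no vertex $v_i\in V_1$ is allowed to fail sharpness, then the combinatorial constraint (unique center per star) forces the structure to collapse to the star $S_n$ with $V_1=\{\text{center}\}$; I would state this explicitly rather than leave it to the reader. The remaining content reduces to the straightforward substitutions already carried out in the preceding corollary, so the proof itself will be quite short.
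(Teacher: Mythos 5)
Your proposal is correct and follows exactly the route the paper intends: it cites Lemma \ref{lemma Randic vertex energy bipartite} together with Theorems \ref{Randic vertex energy lower bound 1} and \ref{Randic vertex energy upper bound 1} and leaves the combination to the reader, which is precisely your term-by-term summation over $V_1$. Your added observation that the upper-bound equality forces $|V_1|=1$ (since a star has a unique center) is a useful clarification that the paper omits.
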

\begin{proof}
The proof follows from Theorems \ref{Randic vertex energy lower bound 1}, \ref{Randic vertex energy upper bound 1}, and Lemma \ref{lemma Randic vertex energy bipartite}.
\end{proof}

Now, we present some other bounds for Randi\'c energy of a vertex by a different combinatorial way. Let us recall that $\binom{\alpha}{k}\coloneqq\frac{\alpha(\alpha -1)\cdots(\alpha-k+1)}{k!}$.

\begin{lemma}
Let $G=(V,E)$ be a connected graph, then $\forall$ $v_i\in V$
$$\mathcal{RE}_G(v_i)=\sum\limits_{k=0}^\infty \binom{\frac{1}{2}}{k}\left(R^2(G)-I\right)^k_{ii}.$$
\end{lemma}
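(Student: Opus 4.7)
The plan is to recognize this identity as the operator-valued binomial series for the square root. Since $R(G)$ is Hermitian, $|R(G)| = (R(G)R(G)^*)^{1/2} = (R^2(G))^{1/2}$, and I can write $R^2(G) = I + (R^2(G)-I)$, so the goal reduces to justifying the matrix identity
\[
(R^2(G))^{1/2} = \sum_{k=0}^{\infty}\binom{1/2}{k}\bigl(R^2(G)-I\bigr)^{k}
\]
and then reading off the $(i,i)$ entry.

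First I would verify convergence. By Theorem \ref{Randic spectral radius} the Randi\'c spectral radius is $1$, hence every eigenvalue $\lambda_j$ of $R(G)$ satisfies $|\lambda_j|\le 1$, so the eigenvalues $\lambda_j^2-1$ of $R^2(G)-I$ all lie in $[-1,0]$. The scalar binomial series $\sum_{k\ge 0}\binom{1/2}{k}x^{k}$ converges to $(1+x)^{1/2}$ throughout $|x|\le 1$ (at the boundary this is Abel's theorem, or a direct estimate using $\binom{1/2}{k}=O(k^{-3/2})$), so for each $j$ the series evaluated at $x=\lambda_j^2-1$ converges absolutely to $|\lambda_j|$.

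Next I would pass from scalars to matrices using the spectral decomposition $R(G)=Y\Lambda Y^{t}$ of Lemma \ref{Lemma definittion of vetrex energy}. Then $(R^2(G)-I)^{k}=Y(\Lambda^{2}-I)^{k}Y^{t}$, whose $(i,i)$ entry, by the same computation as in Lemma \ref{Lemma of iith entry of Randic higher power}, equals $\sum_{j=1}^{n}y_{ij}^{2}(\lambda_j^{2}-1)^{k}$. Summing in $k$ and swapping the order of summation gives
\[
\sum_{k=0}^{\infty}\binom{1/2}{k}\bigl(R^2(G)-I\bigr)^{k}_{ii}
=\sum_{j=1}^{n} y_{ij}^{2}\sum_{k=0}^{\infty}\binom{1/2}{k}(\lambda_j^{2}-1)^{k}
=\sum_{j=1}^{n} y_{ij}^{2}\,|\lambda_j|,
\]
which is exactly $\mathcal{RE}_G(v_i)$ by Lemma \ref{Lemma definittion of vetrex energy}.

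The only delicate point is legitimizing the interchange of the two sums, which is the main obstacle. Since $\sum_{j} y_{ij}^{2}=1$ and the inner series converges absolutely with bound independent of $j$ (using $|\lambda_j^{2}-1|\le 1$ together with $\sum_{k}|\binom{1/2}{k}|<\infty$), Fubini for series applies at once. The boundary case $\lambda_j=0$, which makes $\lambda_j^{2}-1=-1$, is handled by the fact that $\sum_{k\ge 0}\binom{1/2}{k}(-1)^{k}=0=|\lambda_j|$, so no special argument beyond the above is needed.
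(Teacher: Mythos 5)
Your proposal is correct and follows essentially the same route as the paper: write $|R(G)|=(I+(R^2(G)-I))^{1/2}$, use the fact that the Randi\'c spectral radius is $1$ to place the eigenvalues of $R^2(G)-I$ in $[-1,0]$, and expand in the binomial series. The only difference is that you additionally justify convergence at the boundary point $x=-1$ (arising when $0\in\spec(R(G))$) and the interchange of summations via $\sum_{k}\bigl|\binom{1/2}{k}\bigr|<\infty$, details the paper's proof passes over silently.
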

\begin{proof}
The spectral radius of $R(G)$ is 1. It follows that the matrix $\left(R^2(G)-I\right)$ is negative semi definite and has all eigenvalues in the interval $[-1,0]$. Therefore, expanding in power series, we have
$$|R(G)|=\left(I+R^2(G)-I\right)^{\frac{1}{2}}=\sum\limits_{k=0}^\infty \binom{\frac{1}{2}}{k}\left(R^2(G)-I\right)^k.$$
Now, $\mathcal{RE}_G(v_i)=|R(G)|_{ii}$, and hence the result follows.
\end{proof}

Now if we consider the first few terms of the expansion for $|R(G)|$, We have
$$|R(G)|=I+\frac{1}{2}\left(R^2(G)-I\right)-\frac{1}{2\cdot4}\left(R^2(G)-I\right)^2+\frac{1\cdot3}{2\cdot4\cdot6}\left(R^2(G)-I\right)^3-\ldots$$
From this expansion, we have the following proposition.
\begin{proposition}\label{Randic vertex energy upper bound 2}
Let $G$ be a connected graph. Then $\forall$ $v_i\in V$
$$\mathcal{RE}_G(v_i)\leq\frac{1}{2}\left(\frac{1}{d_i}\sum\limits_{i\sim j}\frac{1}{d_j}+1\right).$$
\end{proposition}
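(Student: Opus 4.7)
The plan is to truncate the series expansion from the preceding lemma after its first two terms and then argue that the remaining infinite tail contributes nonpositively to the $(i,i)$ entry. Concretely, set $M\coloneqq R^2(G)-I$ and split
$$|R(G)| \;=\; I + \tfrac{1}{2}M + \sum_{k\ge 2}\binom{1/2}{k}M^k.$$
The $(i,i)$ entry of the first two terms is $1 + \tfrac{1}{2}\bigl((R^2(G))_{ii}-1\bigr) = \tfrac{1}{2}\bigl(\tfrac{1}{d_i}\sum_{i\sim j}\tfrac{1}{d_j}+1\bigr)$, where I am using the identity $(R^2(G))_{ii}=\tfrac{1}{d_i}\sum_{i\sim j}\tfrac{1}{d_j}$ already employed in Theorem \ref{Randic vertex energy upper bound 1}. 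This is precisely the claimed upper bound, so the task reduces to showing $T_{ii}\le 0$, where $T\coloneqq\sum_{k\ge 2}\binom{1/2}{k}M^k$.

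For this I would pass from a scalar inequality to the matrix version via functional calculus. By Theorem \ref{Randic spectral radius}, every eigenvalue of $R(G)$ lies in $[-1,1]$, so the eigenvalues of $M$ lie in $[-1,0]$. Consider the scalar function $f(x)\coloneqq(1+x)^{1/2}-1-\tfrac{1}{2}x$. It is nonpositive on $[-1,\infty)$: this is simply the concavity of $\sqrt{\,\cdot\,}$, which forces the graph of $(1+x)^{1/2}$ to lie below its tangent line $1+\tfrac{1}{2}x$ at $x=0$. Since the Taylor series of $(1+x)^{1/2}$ converges on $[-1,0]$ and $M$ is symmetric with spectrum in that interval, the matrix series for $T$ converges and equals $f(M)$ in the operator sense. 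Therefore $T\preceq 0$, so every diagonal entry of $T$ is nonpositive; in particular $T_{ii}\le 0$.

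Combining the two steps yields $\mathcal{RE}_G(v_i)=|R(G)|_{ii}\le \tfrac{1}{2}\bigl(\tfrac{1}{d_i}\sum_{i\sim j}\tfrac{1}{d_j}+1\bigr)$. The only mildly delicate point is the transfer from the scalar bound $f(x)\le 0$ to the matrix statement $f(M)\preceq 0$; this is dispatched by the spectral theorem together with convergence of the binomial series on $[-1,0]$, and no term-by-term sign analysis of the coefficients $\binom{1/2}{k}$ is needed. As a sanity check, this same bound can be re-derived in one line via Lemma \ref{Lemma definittion of vetrex energy} and the AM--GM inequality $|\lambda_j|\le\tfrac{1}{2}(\lambda_j^2+1)$, which confirms the calculation but hides the role of the series expansion that motivates the placement of the proposition.
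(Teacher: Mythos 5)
Your proof is correct and follows essentially the same route as the paper: truncate the binomial series for $|R(G)|=(I+(R^2(G)-I))^{1/2}$ after two terms and argue that the tail is negative semidefinite, then read off the $(i,i)$ entry using $(R^2(G))_{ii}=\frac{1}{d_i}\sum_{i\sim j}\frac{1}{d_j}$. Your justification of the tail's sign via $f(x)=(1+x)^{1/2}-1-\frac{x}{2}\le 0$ and functional calculus is actually more careful than the paper's one-line appeal to negative semidefiniteness, and your AM--GM remark $|\lambda_j|\le\frac{1}{2}(\lambda_j^2+1)$ combined with Lemma \ref{Lemma definittion of vetrex energy} is a valid shortcut to the same bound.
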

\begin{proof}
Since, the matrix $\left(R^2(G)-I\right)$ is negative semi definite, if we consider first two terms in the expansion of $|R(G)|$ then
$$|R(G)|\leq I+\frac{1}{2}\left(R^2(G)-I\right).$$
Again, $\left(R^2(G)-I\right)_{ii}=\frac{1}{d_i}\sum\limits_{i\sim j}\frac{1}{d_j}-1$ implies
\begin{align*}
\mathcal{RE}_G(v_i)=|R(G)|_{ii}&\leq 1+\frac{1}{2}\left(R^2(G)-I\right)_{ii}\\
&=1+\frac{1}{2}\left(\frac{1}{d_i}\sum\limits_{i\sim j}\frac{1}{d_j}-1\right)\\
&=\frac{1}{2}\left(\frac{1}{d_i}\sum\limits_{i\sim j}\frac{1}{d_j}+1\right).\qedhere
\end{align*}
\end{proof}
It is important to highlight that the upper bound presented in Theorem \ref{Randic vertex energy upper bound 1} provides a significantly stronger estimate compared to the bound given in Proposition \ref{Randic vertex energy upper bound 2}. However, the bound in Proposition \ref{Randic vertex energy upper bound 2} is derived using only the first two terms in the expansion of $|R(G)|$. Our aim is simply to illustrate how the energy of a vertex can be estimated using this approach. Naturally, by including more terms from the expansion, we can obtain a much sharper and more accurate upper bound. In the next proposition, we present an upper bound by considering one more term in the expansion of $|R(G)|$.

\begin{proposition}\label{Randic vertex energy upper bound 3}
Let $G$ be a connected graph. Then $\forall$ $v_i\in V$
$$\mathcal{RE}_G(v_i)\leq\frac{3}{8}+\frac{3}{4}\frac{1}{d_i}\sum\limits_{i\sim j}\frac{1}{d_j}-\frac{1}{8}\sum\limits_{j=1}^n\frac{1}{d_id_j}\left(\sum\limits_{k\sim i \& j}\frac{1}{d_k}\right)^2.$$
\end{proposition}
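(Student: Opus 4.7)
The plan is to continue the approach of Proposition \ref{Randic vertex energy upper bound 2}, this time retaining one additional term from the binomial expansion
$$|R(G)| = \sum_{k=0}^\infty \binom{1/2}{k}\bigl(R^2(G)-I\bigr)^k$$
established in the preceding lemma. The argument splits into two parts: first, justify that truncating after $k=2$ gives an upper bound on each diagonal entry of $|R(G)|$; second, expand the truncated expression into the graph-theoretic quantities appearing in the statement.

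The main step is a sign analysis of the tail $\sum_{k\geq 3}\binom{1/2}{k}(R^2(G)-I)^k$. Since the Randi\'c spectral radius is $1$, the matrix $R^2(G)-I$ is negative semi-definite with spectrum in $[-1,0]$, so $(R^2(G)-I)^k$ is PSD when $k$ is even and NSD when $k$ is odd. A short calculation gives $\operatorname{sgn}\binom{1/2}{k} = (-1)^{k-1}$ for $k\geq 1$, so in both parities the product $\binom{1/2}{k}(R^2(G)-I)^k$ is NSD. Hence the tail is NSD, its diagonal entries are non-positive, and truncating yields
$$|R(G)|_{ii} \leq 1 + \frac{1}{2}\bigl(R^2(G)-I\bigr)_{ii} - \frac{1}{8}\bigl(R^2(G)-I\bigr)^2_{ii}.$$

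Next I expand $(R^2-I)^2 = R^4 - 2R^2 + I$ and collect terms to rewrite the right-hand side as
$$\frac{3}{8} + \frac{3}{4}\bigl(R^2(G)\bigr)_{ii} - \frac{1}{8}\bigl(R^4(G)\bigr)_{ii}.$$
The entry $(R^2(G))_{ii} = \frac{1}{d_i}\sum_{j\sim i}\frac{1}{d_j}$ is already identified in the proof of Theorem \ref{Randic vertex energy upper bound 1}. For $(R^4(G))_{ii}$, I use the symmetry of $R$ to write $(R^4)_{ii} = \sum_j (R^2)_{ij}^2$, and from the definition of $R$ I compute
$$\bigl(R^2(G)\bigr)_{ij} = \sum_{k} R_{ik}R_{kj} = \frac{1}{\sqrt{d_id_j}}\sum_{k\sim i\,\&\,j}\frac{1}{d_k}.$$
Squaring and summing over $j$ produces exactly the last term of the stated inequality, completing the proof.

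The only delicate point is the parity/sign analysis of the tail, which is the same observation already implicit in Proposition \ref{Randic vertex energy upper bound 2}, just carried one Taylor term further. Once that is in place, the remainder is a mechanical algebraic simplification and a short walk-style identification of $(R^4(G))_{ii}$, so no substantive obstacle arises.
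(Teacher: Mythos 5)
Your proof is correct and follows essentially the same route as the paper: truncate the binomial expansion of $|R(G)|=\left(I+(R^2(G)-I)\right)^{1/2}$ after the $k=2$ term, then identify $(R^2(G))_{ii}$ and $(R^4(G))_{ii}$ combinatorially. In fact your sign analysis of the tail (each term $\binom{1/2}{k}(R^2(G)-I)^k$ for $k\geq 1$ is negative semi-definite, since $\operatorname{sgn}\binom{1/2}{k}=(-1)^{k-1}$ while $(R^2(G)-I)^k$ alternates between PSD and NSD) is more explicit than the paper's, which leaves the justification of the truncation step implicit.
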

\begin{proof}
It is easy to see that
\begin{align*}
\left(R^2(G)-I\right)^2_{ii}&=\left(R^4(G)+I-2R^2(G)\right)_{ii}\\
&=\sum\limits_{j=1}^n\frac{1}{d_id_j}\left(\sum\limits_{k\sim i \& j}\frac{1}{d_k}\right)^2+1-2\frac{1}{d_i}\sum\limits_{i\sim j}\frac{1}{d_j}.
\end{align*}
Which implies
\begin{align*}
\mathcal{RE}_G(v_i)=|R(G)|_{ii}&\leq 1+\frac{1}{2}\left(R^2(G)-I\right)_{ii}-\frac{1}{8}\left(R^2(G)-I\right)2_{ii}\\
&=1+\frac{1}{2}\left(\frac{1}{d_i}\sum\limits_{i\sim j}\frac{1}{d_j}-1\right)-\frac{1}{8}\left(\sum\limits_{j=1}^n\frac{1}{d_id_j}\left(\sum\limits_{k\sim i \& j}\frac{1}{d_k}\right)^2+1-2\frac{1}{d_i}\sum\limits_{i\sim j}\frac{1}{d_j}\right)\\
&=\frac{3}{8}+\frac{3}{4}\frac{1}{d_i}\sum\limits_{i\sim j}\frac{1}{d_j}-\frac{1}{8}\sum\limits_{j=1}^n\frac{1}{d_id_j}\left(\sum\limits_{k\sim i \& j}\frac{1}{d_k}\right)^2.\qedhere
\end{align*}
\end{proof}

\section{Transitive graphs and their Randi\'c energy}

For a graph $G$, if there is an automorphism $f:G\rightarrow G$ such that $f(v_i)=v_j$ for all pair of vertices $v_i\in V$ i.e., all the vertices are structurally identical, then it is clear that the vertices $v_i$ and $v_j$ have equal Randi\'c energies. For such graphs, the Randi\'c energy of each vertex is simply the total Randi\'c energy divided by the number of vertices, i.e., if $G$ be a graph with $n$ vertices, then for each $i\in \{1,\ldots,n\}$
$$\mathcal{RE}_G(v_i)=\frac{\mathcal{RE}(G)}{n}.$$

Also, there are other graphs, like complete bipartite graph, friendship graph, where similar simplifications are possible, even though the graph may not be vertex-transitive. In this section, we study the Randi\'c energy of such vertex transitive graphs and some other symmetric graphs.

\begin{proposition}
In a complete graph $K_n$, the Randi\'c energy of a vertex is inversely proportional to the number of vertices.
\end{proposition}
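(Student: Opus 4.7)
The plan is to exploit the high symmetry of $K_n$: since $K_n$ is vertex-transitive, the preamble to this section tells us that every vertex carries the same Randi\'c energy, namely $\mathcal{RE}(K_n)/n$. So the entire task reduces to computing $\mathcal{RE}(K_n)$ explicitly.

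First I would write down the Randi\'c matrix. Every vertex of $K_n$ has degree $n-1$, so
\[
R(K_n)=\frac{1}{n-1}\bigl(J-I\bigr),
\]
where $J$ is the $n\times n$ all-ones matrix. Next I would read off the spectrum of $J-I$: the eigenvalues are $n-1$ (with the all-ones eigenvector, multiplicity $1$) and $-1$ (multiplicity $n-1$, on the hyperplane orthogonal to the all-ones vector). Dividing by $n-1$, the Randi\'c eigenvalues of $K_n$ are $\lambda_1=1$ and $\lambda_2=\cdots=\lambda_n=-\frac{1}{n-1}$.

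Summing absolute values gives the Randi\'c energy of the whole graph:
\[
\mathcal{RE}(K_n)=1+(n-1)\cdot\frac{1}{n-1}=2.
\]
Combining this with vertex-transitivity yields
\[
\mathcal{RE}_{K_n}(v_i)=\frac{\mathcal{RE}(K_n)}{n}=\frac{2}{n},
\]
which is exactly the claimed inverse proportionality with proportionality constant $2$.

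There is essentially no obstacle here; the only point requiring minor care is the justification that all vertices have the same vertex energy, which follows from the automorphism argument already made in the section preamble (any permutation of the labels of $K_n$ is an automorphism, so $|R(K_n)|_{ii}$ is independent of $i$). Alternatively, if one wishes to avoid invoking vertex-transitivity, the same conclusion follows directly from Lemma \ref{Lemma definittion of vetrex energy} using the fact that $|R(K_n)|=Y|\Lambda|Y^t$ and that the diagonal entries of $Y|\Lambda|Y^t$ are all equal by the structure of the eigenspaces of $J-I$.
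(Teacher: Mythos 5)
Your proposal is correct and follows essentially the same route as the paper: compute the Randi\'c spectrum of $K_n$ (eigenvalue $1$ once and $-\tfrac{1}{n-1}$ with multiplicity $n-1$), sum absolute values to get $\mathcal{RE}(K_n)=2$, and divide by $n$ using vertex-transitivity. The only difference is that you derive the spectrum from $R(K_n)=\frac{1}{n-1}(J-I)$ explicitly, whereas the paper simply quotes it.
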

\begin{proof}
We have $\spec(R(K_n))=\left(\begin{array}{cc}
                                        1 & \frac{-1}{n-1} \\
                                        1 & n-1
                                      \end{array}\right)$,
implies $\mathcal{RE}(K_n)=1+(n-1)|\frac{-1}{n-1}|=2$. Due to transitivity of the vertices, $\mathcal{RE}_{K_n}(v_i)=\frac{2}{n}.$
\end{proof}
The complete graph $K_5$ and $K_8$ along with the Randi\'c energy of its vertices are given in figure \ref{Fig vertex energy complete graph energy}.\\

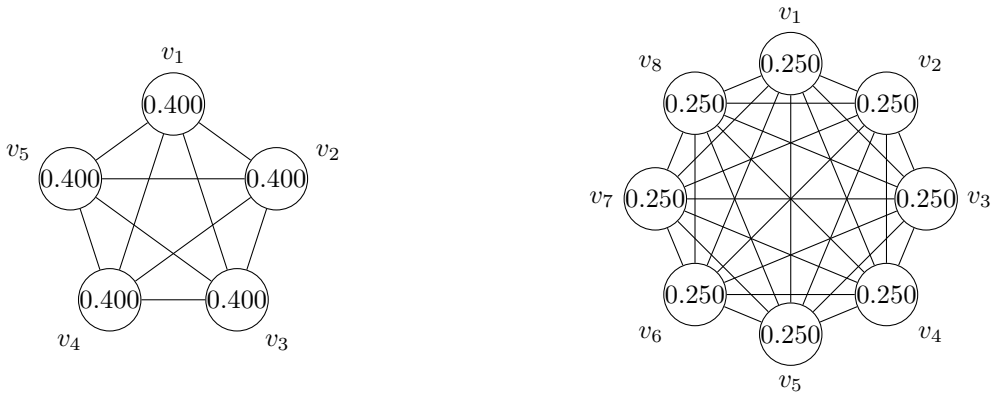
\begin{figure}[H]
\centering
\begin{minipage}{0.45\textwidth}
\centering
\begin{tikzpicture}[scale=.6,vertex/.style={circle,draw,minimum size=8mm,inner sep=0pt}]
\node[vertex,label=90:$v_1$]  (v1) at (90:2.4cm) {0.400};
\node[vertex,label=18:$v_2$]  (v2) at (18:2.4cm) {0.400};
\node[vertex,label=-54:$v_3$] (v3) at (-54:2.4cm) {0.400};
\node[vertex,label=-126:$v_4$](v4) at (-126:2.4cm) {0.400};
\node[vertex,label=162:$v_5$] (v5) at (162:2.4cm) {0.400};

\foreach \a/\b in {1/2,1/3,1/4,1/5, 2/3,2/4,2/5, 3/4,3/5, 4/5}
 \draw (v\a) -- (v\b);
 \end{tikzpicture}
\end{minipage}
\hfill
\begin{minipage}{0.45\textwidth}
\centering
 \begin{tikzpicture}[scale=.6,vertex/.style={circle,draw,minimum size=8mm,inner sep=0pt}]
\node[vertex,label=90:$v_1$]   (v1) at (90:3cm)   {0.250};
\node[vertex,label=45:$v_2$]   (v2) at (45:3cm)   {0.250};
\node[vertex,label=0:$v_3$]    (v3) at (0:3cm)    {0.250};
\node[vertex,label=-45:$v_4$]  (v4) at (-45:3cm)  {0.250};
\node[vertex,label=-90:$v_5$]  (v5) at (-90:3cm)  {0.250};
\node[vertex,label=-135:$v_6$] (v6) at (-135:3cm) {0.250};
\node[vertex,label=180:$v_7$]  (v7) at (180:3cm)  {0.250};
\node[vertex,label=135:$v_8$]  (v8) at (135:3cm)  {0.250};

\foreach \a/\b in {
1/2,1/3,1/4,1/5,1/6,1/7,1/8,
2/3,2/4,2/5,2/6,2/7,2/8,
3/4,3/5,3/6,3/7,3/8,
4/5,4/6,4/7,4/8,
5/6,5/7,5/8,
6/7,6/8,
7/8}
  \draw (v\a) -- (v\b);
 \end{tikzpicture}
\end{minipage}
\caption{Complete Graphs: $K_5$ and $K_8$.}\label{Fig vertex energy complete graph energy}
\end{figure}

\begin{proposition}
Let $C_n$ be the cycle graph with $n$ vertices, then
$$
\mathcal{RE}_{C_n}(v_i)=\left\{
  \begin{array}{ll}
    \frac{2\cos{\frac{\pi}{n}}}{n\sin{\frac{\pi}{n}}}, & \hbox{when $n\equiv 0$ mod(4);} \\
    \frac{1}{n\sin{\frac{\pi}{2n}}}, & \hbox{when $n\equiv 1$ mod(4);}\\
    \frac{2}{n\sin{\frac{\pi}{n}}}, & \hbox{when $n\equiv 2$ mod(4),}
  \end{array}
\right.
$$
for $k\in \{0,\ldots,n-1\}$.\\
\end{proposition}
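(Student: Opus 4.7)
My plan is to exploit the vertex-transitivity of $C_n$ to reduce the problem to computing the total Randi\'c energy $\mathcal{RE}(C_n)$, and then to evaluate that total via the explicit Randi\'c spectrum of the cycle.

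First, the cyclic rotation $v_i\mapsto v_{i+1\,(\mathrm{mod}\,n)}$ is an automorphism placing all vertices in a single orbit, so by the vertex-transitive observation opening this section, $\mathcal{RE}_{C_n}(v_i)=\mathcal{RE}(C_n)/n$ for every $i$. Because $C_n$ is $2$-regular, $R(C_n)=\tfrac12 A(C_n)$, and the classical adjacency spectrum $\{2\cos(2\pi k/n):k=0,\ldots,n-1\}$ yields the Randi\'c eigenvalues $\lambda_k=\cos(2\pi k/n)$. Hence
$$\mathcal{RE}(C_n)=\sum_{k=0}^{n-1}\left|\cos\tfrac{2\pi k}{n}\right|.$$

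The second step is to evaluate this sum in closed form by splitting on the sign of each $\cos(2\pi k/n)$ and applying the standard trigonometric identity $\sum_{k=0}^{M-1}\cos(k\alpha)=\frac{\sin(M\alpha/2)}{\sin(\alpha/2)}\cos\frac{(M-1)\alpha}{2}$. The case analysis tracks which of the values $\pm 1, 0$ appear among $\{\cos(2\pi k/n)\}_{k=0}^{n-1}$: when $n\equiv 0\pmod 4$ both $\pm 1$ occur and two eigenvalues vanish (at $k=n/4,3n/4$); when $n\equiv 2\pmod 4$ both $\pm 1$ occur but no eigenvalue vanishes; and when $n\equiv 1\pmod 4$ only $+1$ appears among $\pm 1$ and no zero eigenvalue is present. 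In each case the symmetry $\cos(2\pi(n-k)/n)=\cos(2\pi k/n)$ halves the sum, the identity $\cos(\pi-\theta)=-\cos\theta$ converts absolute values of negative cosines back into positive cosines, and the resulting arithmetic-progression sum collapses to an expression in $\sin(\pi/n)$ or $\sin(\pi/(2n))$, matching the three branches of the proposition.

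The principal obstacle is the bookkeeping in the case split combined with the subsequent trigonometric simplification. For example, in the $n=4m$ case one arrives (after pairing $k$ with $n-k$ and exploiting $\cos(\pi-\theta)=-\cos\theta$) at $2+4\sum_{k=1}^{m-1}\cos(\pi k/(2m))$, which the identity turns into $-2+4\sin(\pi/4)\cos((m-1)\pi/(4m))/\sin(\pi/(4m))$. Collapsing this to $2\cot(\pi/n)$, and thence to $\frac{2\cos(\pi/n)}{n\sin(\pi/n)}$ after dividing by $n$, requires the sum-of-angles expansion $\cos(\pi/4-\pi/(4m))=\tfrac{\sqrt{2}}{2}[\cos(\pi/(4m))+\sin(\pi/(4m))]$; entirely analogous manipulations, differing only in which boundary terms survive, handle the other two branches. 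I would verify each branch against the small cases $n=4,5,6,8$ before committing to the final expressions.
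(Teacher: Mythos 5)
Your proposal is correct and follows essentially the same route as the paper: reduce to $\mathcal{RE}(C_n)/n$ by vertex-transitivity, observe $R(C_n)=\tfrac12 A(C_n)$ so the Randi\'c eigenvalues are $\cos(2\pi k/n)$, and evaluate $\sum_k|\cos(2\pi k/n)|$ by the residue-class case split. Your write-up is in fact more complete than the paper's, which leaves the trigonometric summation implicit; your explicit computation (e.g.\ collapsing the $n=4m$ case to $2\cot(\pi/n)$) checks out.
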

\begin{proof}
Clearly, $R(C_n)=\frac{1}{2}A(C_n)$, where $A(C_n)$ is the adjacency matrix of $C_n$. Hence, $\cos\left({\frac{2\pi k}{n}}\right)\in\spec(R(C_n))$, $k\in \{0,\ldots,n-1\}$. The total energy can be computed by taking the positive eigenvalues of $R(C_n)$ which leads to the three conditions $n\equiv 0 \; \text{mod}(4)$, $n\equiv 1 \; \text{mod}(4)$ and $n\equiv 2\; \text{mod}(4)$.
\end{proof}

\begin{proposition}
Let $K_{n_1,n_2}$ be the Complete bipartite graph with $V_1$ and $V_2$ be its vertex partitions such that $|V_1|=n_1$ and $|V_2|=n_2$, then
$$\mathcal{RE}_{K_{n_1,n_2}}(v_i)=\left\{
    \begin{array}{ll}
      \frac{1}{n_1}, & \hbox{if $v_i \in V_1$;} \\
      \frac{1}{n_2}, & \hbox{if $v_i \in V_2$.}
    \end{array}
  \right.
$$
\end{proposition}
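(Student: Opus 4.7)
The plan is to exploit the block structure of $R(K_{n_1,n_2})$ together with the block-diagonal form of $|R(G)|$ for bipartite graphs, as already established in Lemma \ref{lemma Randic vertex energy bipartite}. Order the vertices so that those of $V_1$ come first. Every vertex in $V_1$ has degree $n_2$ and every vertex in $V_2$ has degree $n_1$, so
$$R(K_{n_1,n_2}) = \begin{pmatrix} 0 & X \\ X^t & 0 \end{pmatrix}, \qquad X = \tfrac{1}{\sqrt{n_1 n_2}}\, J_{n_1 \times n_2},$$
where $J_{n_1 \times n_2}$ is the $n_1 \times n_2$ all-ones matrix. Then $|R(K_{n_1,n_2})|$ is block diagonal with blocks $(XX^t)^{1/2}$ and $(X^tX)^{1/2}$, so the task reduces to extracting the diagonals of these two blocks.

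The second step is to compute $XX^t$ and $X^tX$ explicitly. A direct product gives $XX^t = \frac{1}{n_1}J_{n_1}$ and $X^tX = \frac{1}{n_2}J_{n_2}$, where $J_n$ denotes the $n \times n$ all-ones matrix. The key observation is that $\frac{1}{n}J_n$ is an orthogonal projection of rank one (it is symmetric, positive semidefinite, and idempotent since $J_n^2 = n J_n$). Consequently it coincides with its own unique positive semidefinite square root: $(\frac{1}{n}J_n)^{1/2} = \frac{1}{n}J_n$. Applying this to each block yields $(XX^t)^{1/2} = \frac{1}{n_1}J_{n_1}$ and $(X^tX)^{1/2} = \frac{1}{n_2}J_{n_2}$.

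Finally, reading off the diagonal entries gives $|R(G)|_{ii} = \frac{1}{n_1}$ for each $v_i \in V_1$ and $|R(G)|_{jj} = \frac{1}{n_2}$ for each $v_j \in V_2$, which by definition is $\mathcal{RE}_{K_{n_1,n_2}}(v_i)$. I expect the only subtle step to be justifying that $\frac{1}{n}J_n$ equals the positive square root of $XX^t$; this is where one must invoke both positive semidefiniteness and idempotence so that uniqueness of the positive square root is available. Once that is in hand the computation is immediate, and as a sanity check one sees that the totals $n_1 \cdot \frac{1}{n_1} + n_2 \cdot \frac{1}{n_2} = 2$ recover the known value $\mathcal{RE}(K_{n_1,n_2}) = 2$, consistent with Lemma \ref{lemma Randic vertex energy bipartite}.
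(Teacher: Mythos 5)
Your proof is correct, but it takes a different route from the paper. The paper's proof is much shorter and more indirect: it notes that $\mathcal{RE}(K_{n_1,n_2})=2$, invokes Lemma \ref{lemma Randic vertex energy bipartite} to split this equally as $1$ per partition, and then implicitly appeals to the vertex-transitivity within each partition (the automorphism argument stated at the start of Section 4) to divide by $n_1$ and $n_2$ respectively. You instead compute $|R(K_{n_1,n_2})|$ explicitly: writing $X=\tfrac{1}{\sqrt{n_1n_2}}J_{n_1\times n_2}$, you find $XX^t=\tfrac{1}{n_1}J_{n_1}$ and $X^tX=\tfrac{1}{n_2}J_{n_2}$, observe that $\tfrac{1}{n}J_n$ is an idempotent positive semidefinite matrix and hence equals its own unique positive square root, and read off the diagonal. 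Your computations check out (degrees $n_2$ and $n_1$ on the two sides, $J_{n_1\times n_2}J_{n_2\times n_1}=n_2J_{n_1}$, etc.), and your argument has the advantage of being fully self-contained --- it produces the entire matrix $|R(G)|$ and does not rely on the within-partition symmetry step, which the paper leaves implicit. The paper's approach buys brevity and reuses the general bipartite distribution lemma; yours buys an explicit closed form and a cleaner justification of why every vertex in a given partition has the same energy.
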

\begin{proof}
Clearly, $\mathcal{RE}(K_{n_1,n_2})=2$. Since the energy of a bipartite graph is distributed equally in its two partitions, this implies that each partition has total energy 1, and hence the result.
\end{proof}

The star graph $S_n=K_{1,n-1}$ with $n$ vertices can be considered as a particular case of a complete bipartite graph. Therefore,
$$\mathcal{RE}_{S_n}(v_i)=\left\{
    \begin{array}{ll}
      \;\;1, & \hbox{if $v_i$ is the centre of the star;} \\
      \frac{1}{n-1}, & \hbox{otherwise.}
    \end{array}
  \right.
$$

The friendship graph $F_n$ can be constructed by joining $n$ copies of the cycle graph $C_3$ with a common vertex i.e., every two vertices have exactly one neighbor in common. Even though it is not edge-transitive, the graph has enough regularity and hence we make the following proposition.

\begin{proposition}
Let $F_n$ be the Friendship graph with $n$ vertices, then
$$\mathcal{RE}_{F_n}(v_i)=\left\{
    \begin{array}{ll}
      \;\;\frac{2}{3}, & \hbox{if $v_i$ is the common vertex;} \\
      \frac{3n+1}{6n}, & \hbox{otherwise.}
    \end{array}
  \right.
$$
\end{proposition}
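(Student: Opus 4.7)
The plan is to exploit the high symmetry of $F_n$ to reduce the claim to two computations: the total Randi\'c energy $\mathcal{RE}(F_n)$ and the Randi\'c energy of the common vertex $v_0$. Label the outer vertices in pairs $(v_{2k-1}, v_{2k})$ for $k = 1, \ldots, n$, so that $\{v_0, v_{2k-1}, v_{2k}\}$ forms the $k$-th triangle. Since $\mathrm{Aut}(F_n)$ acts transitively on the $2n$ outer vertices (partners within a pair may be swapped and pairs may be permuted freely), Lemma \ref{Lemma definittion of vetrex energy} forces all outer vertices to share a common Randi\'c energy. Hence it is enough to determine $\mathcal{RE}_{F_n}(v_0)$ and $\mathcal{RE}(F_n)$ and then distribute the remainder equally.

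To compute the spectrum of $R(F_n)$ I would exhibit eigenvectors adapted to the pair structure. Each of the $n$ antisymmetric vectors $(e_{2k-1}-e_{2k})/\sqrt{2}$ has eigenvalue $-\tfrac{1}{2}$, while the $(n-1)$-dimensional space of pair-symmetric combinations $\sum_k c_k (e_{2k-1}+e_{2k})/\sqrt{2}$ with $\sum_k c_k = 0$ yields eigenvalue $+\tfrac{1}{2}$. The remaining two eigenvectors lie in $W = \mathrm{span}\{e_0, u\}$ with $u = \tfrac{1}{\sqrt{2n}}\sum_{j=1}^{2n} e_j$; a direct check gives $R(F_n)e_0 = \tfrac{1}{\sqrt{2}} u$ and $R(F_n) u = \tfrac{1}{\sqrt{2}} e_0 + \tfrac{1}{2} u$, so $W$ is $R$-invariant and $R|_W$ has eigenvalues $1$ and $-\tfrac{1}{2}$. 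A dimension count $n + (n-1) + 2 = 2n+1$ confirms the full spectrum is $\{1,\ (\tfrac{1}{2})^{n-1},\ (-\tfrac{1}{2})^{n+1}\}$, giving $\mathcal{RE}(F_n) = 1 + \tfrac{n-1}{2} + \tfrac{n+1}{2} = n+1$.

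For $\mathcal{RE}_{F_n}(v_0)$ I would use that since $R(F_n)$ is Hermitian and $W$ is $R$-invariant, $W$ is invariant under every spectral projection of $R(F_n)$, and in particular under $|R(F_n)|$. Thus the restriction of $|R(F_n)|$ to $W$ equals $|R|_W|$. Diagonalizing the $2\times 2$ action of $R$ on $W$ and expressing $e_0$ in the resulting orthonormal eigenbasis produces squared coefficients $\tfrac{1}{3}$ (for $\lambda = 1$) and $\tfrac{2}{3}$ (for $\lambda = -\tfrac{1}{2}$), so Lemma \ref{Lemma definittion of vetrex energy} gives $\mathcal{RE}_{F_n}(v_0) = \tfrac{1}{3}\cdot 1 + \tfrac{2}{3}\cdot \tfrac{1}{2} = \tfrac{2}{3}$. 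Subtracting this from $\mathcal{RE}(F_n)$ leaves $\tfrac{3n+1}{3}$ to be distributed equally among the $2n$ outer vertices, yielding $\tfrac{3n+1}{6n}$ for each of them.

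The main obstacle I expect is simply the careful bookkeeping: verifying that the three classes of eigenvectors listed above are mutually orthogonal and span $\mathbb{R}^{2n+1}$ without overlap (handled by the dimension count) and justifying that the eigenbasis of $R|_W$ computed in $\{e_0, u\}$-coordinates may be plugged directly into Lemma \ref{Lemma definittion of vetrex energy} (which follows from the invariance of $W$ under the full spectral decomposition of $R$). Once these small checks are in place, the rest is arithmetic.
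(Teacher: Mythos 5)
Your proof is correct, but it takes a genuinely different route from the paper's at both key steps. For the spectrum, the paper writes $R(F_n)$ in block form and evaluates the characteristic determinant via a Schur complement, whereas you construct an explicit eigenbasis (the $n$ antisymmetric pair vectors for $-\tfrac12$, the $n-1$ zero-sum symmetric combinations for $+\tfrac12$, and the invariant plane $W=\mathrm{span}\{e_0,u\}$ carrying $1$ and $-\tfrac12$); your calculations check out, including $Re_0=\tfrac{1}{\sqrt2}u$ and $Ru=\tfrac{1}{\sqrt2}e_0+\tfrac12 u$, and the dimension count closes the argument. For the energy of the hub, the paper never touches eigenvectors: it exploits the fact that there are only three distinct eigenvalues and solves the $3\times3$ Vandermonde system for the spectral weights $x_{1k}$ using the local moments $\phi_1(R^0)=1$, $\phi_1(R^1)=0$, $\phi_1(R^2)=\tfrac{1}{d_1}\sum_{j\sim 1}\tfrac{1}{d_j}=\tfrac12$ (via Lemma \ref{Lemma of iith entry of Randic higher power}), obtaining $(x_{11},x_{12},x_{13})=(\tfrac13,0,\tfrac23)$; you instead diagonalize $R|_W$ and read off the squared coefficients of $e_0$, which gives the same weights. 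The paper's moment method is slicker and generalizes to any vertex of any graph whose Randi\'c matrix has few distinct eigenvalues, without needing eigenvectors at all; your approach is more self-contained and makes the vanishing of the weight on the eigenvalue $\tfrac12$ transparent (all those eigenvectors have zero $e_0$-coordinate), and it simultaneously furnishes an honest proof of the spectrum that the paper only sketches. Both then finish identically by subtracting $\tfrac23$ from $\mathcal{RE}(F_n)=n+1$ and dividing by $2n$.
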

\begin{proof}
The Randi\'c matrix of the friendship graph is of the form
$$
R(F_n)=\left(
  \begin{array}{c|c}
    0 & \frac{1}{2\sqrt{n}}{\bf{1}}^t_{2n} \\
\hline
    \frac{1}{2\sqrt{n}}{\bf{1}}_{2n} & \frac{1}{2}I_n\otimes \left[
                                                               \begin{array}{cc}
                                                                 0 & 1 \\
                                                                 1 & 0 \\
                                                               \end{array}
                                                             \right]
 \\
  \end{array}
\right)
$$
Let us consider the following characteristic equation
\begin{align*}
&|\lambda I-R(F_n)|=0\\
\Rightarrow&\left|\lambda I-\frac{1}{2}I_n\otimes \left[
                                                               \begin{array}{cc}
                                                                 0 & 1 \\
                                                                 1 & 0 \\
                                                               \end{array}
                                                             \right]\right|\cdot \left|\lambda I-\frac{1}{4n}{\bf{1}}^t\left(\lambda I-\frac{1}{2}I_n\otimes \left[
                                                               \begin{array}{cc}
                                                                 0 & 1 \\
                                                                 1 & 0 \\
                                                               \end{array}
                                                             \right]\right)^{-1}\bf{1}\right|=0.
\end{align*}
Upon solving we get
$\spec(R(F_n))=\left(\begin{array}{ccc}
    1 & \frac{1}{2} & -\frac{1}{2} \\
    1 & n-1 & n+1
  \end{array}\right)$, makes $\mathcal{RE}(F_n)=n+1$.

To compute the energy of the common vertex, say $v_1$ we evaluate the weights of the eigenvalues corresponding to $v_1$. The energy of $v_1$ is then given by the sum over all eigenvalues of the product of each weight with the absolute value of the corresponding eigenvalue. For the remaining $2n$ vertices, their energies are equal due to symmetry, and hence $$\mathcal{RE}_{F_n}(v_i)=\frac{\mathcal{RE}(F_n)-\mathcal{RE}_{F_n}(v_1)}{2n}.$$
Since $R(F_n)$ has only three distinct eigenvalues, say $\lambda_1\geq\lambda_2\geq\lambda_3$, the corresponding weights $y^2_{1i}$ can be determined from the following system of equations:
\begin{align*}
&x_{11}\lambda^0_1+x_{12}\lambda^0_2+x_{13}\lambda^0_3=1\\
&x_{11}\lambda^1_1+x_{12}\lambda^1_2+x_{13}\lambda^1_3=0\\
&x_{11}\lambda^2_1+x_{12}\lambda^2_2+x_{13}\lambda^2_3=\frac{1}{2},
\end{align*}
where $x_{11}=y_{11}^2$, $x_{12}=\sum\limits_{j=2}^{n}y_{1j}^2$ and $x_{13}=\sum\limits_{j=n+1}^{2n+1}y_{1j}^2$. Solving, we get $x_{11}=\frac{1}{3}$, $x_{12}=0$ and $x_{13}=\frac{2}{3}$.
\end{proof}

\section{Coulson Integral Formula for Randi\'{c} Energy of a Vertex}
In many cases, the computation of graph energy directly from the eigenvalues of the graph is neither efficient nor feasible. In such cases, the Coulson integral formula offers an alternative approach to compute the energy of a graph using only its characteristic polynomial, without requiring explicit properties of the eigenvalues. Arizmendi et al. \cite{Arizmendi2019} in 2019, provided a refinement of Coulson integral formula to energy of a vertex in relation with the characteristic polynomial of the graph. Notably, this formula is not restricted to the adjacency matrix but extends to all the hermitian matrices of a graph. In this section, we report the Coulson integral formula for the Randi\'c energy of a vertex.

To apply the Coulson integral formula to compute the Randi\'c energy of a vertex, it is essential to know the characteristic polynomial $\psi_R(G;x)$ of the Randi\'c matrix. There are extensive literature about the $\psi_R(G;x)$, not restricted to the undirected simple graphs but also directed and mixed graphs.

Let $\psi_R(G;x)\coloneqq det(xI-R(G))$ be the characteristic polynomial of $R(G)$ of a simple connected graph $G$. Then the expression for the coefficients of $\psi_R(G;x)$ is given in the following theorem.

\begin{theorem}\cite{Lu2017, Bharali}
If $\psi_R(G;x)\coloneqq x^n+a_1x^{n-1}+\cdots+a_n$, then
$$(-1)^ka_k=\sum\limits_{G'}(-1)^{n-c(G')}2^{s(G'))}\prod_{v_i\in V(G')}\frac{1}{d_i},$$
where the summation is over all elementary sub-graphs $G'$ with order $k$ of $G$, $c(G')$ be the number of components of $G'$, $s(G')$ be the number of cycles of length at least 3 in $G'$.
\end{theorem}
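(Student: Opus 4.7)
The plan is to adapt the classical Sachs--Harary--Spialter proof of the coefficient-subgraph theorem to the Randi\'c matrix, i.e.\ to expand the characteristic polynomial via the Leibniz formula and then reinterpret the surviving terms graph-theoretically. Starting from
\begin{equation*}
\psi_R(G;x)=\det(xI-R(G))=\sum_{\sigma\in S_n}\operatorname{sgn}(\sigma)\prod_{i=1}^{n}(xI-R(G))_{i,\sigma(i)},
\end{equation*}
I use that $R(G)$ has zero diagonal to split: each fixed point $i$ of $\sigma$ contributes a factor $x$, while each non-fixed $i$ contributes $-R_{i,\sigma(i)}$. Writing $U$ for the set of non-fixed indices, the contribution of $\sigma$ becomes $\operatorname{sgn}(\sigma)\,(-1)^{|U|}\,x^{n-|U|}\prod_{i\in U}R_{i,\sigma(i)}$.

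Next I identify the surviving terms with elementary subgraphs. The product $\prod_{i\in U}R_{i,\sigma(i)}$ is non-zero only when each cycle of $\sigma|_U$ traces a sequence of edges of $G$; consequently, the cycles of $\sigma|_U$ are either transpositions (corresponding to a single edge of $G$) or cyclic permutations of length $\geq 3$ (corresponding to a cycle of $G$). The union of the corresponding edges and cycles is precisely an elementary subgraph $G'$ on $k:=|U|$ vertices whose number of components equals the number of cycles of $\sigma|_U$, and among which exactly $s(G')$ are cycles of length $\geq 3$. A single edge of $G'$ corresponds to exactly one transposition, whereas each cycle of length $\geq 3$ corresponds to two oriented cyclic permutations (clockwise and counter-clockwise), contributing a multiplicity of $2^{s(G')}$ to the number of permutations $\sigma$ that realise $G'$.

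The third step is the entry-product evaluation. A transposition $(ij)$ yields $R_{ij}R_{ji}=1/(d_id_j)$. For a $k$-cycle $(i_1\,i_2\,\cdots\,i_k)$ with $k\geq 3$, the product $\prod_{l}R_{i_l,i_{l+1}}$ (indices mod $k$) contains each index in exactly two square-root factors, so the square roots cancel and the value collapses to $\prod_{v\in V(\text{cycle})}1/d_v$. Hence, for any elementary subgraph $G'$, the weight attached to each of its $2^{s(G')}$ realising permutations is $\prod_{v_i\in V(G')}1/d_i$. Combining with the standard identity $\operatorname{sgn}(\sigma)=(-1)^{|U|-c(G')}$, the overall sign $\operatorname{sgn}(\sigma)(-1)^{|U|}$ collapses to $(-1)^{c(G')}$ per elementary subgraph, and summing over all $G'$ of order $k$ delivers the claimed formula (up to the global sign implicit in writing $(-1)^ka_k$).

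The main obstacle will be the three-way sign bookkeeping: the $(-1)^{|U|}$ from the $-R$ factors, the $\operatorname{sgn}(\sigma)=(-1)^{|U|-c(G')}$ factor, and the $(-1)^k$ prefactor attached to $a_k$ must be combined in lockstep, since an off-by-one error in any of them silently breaks the formula. A secondary delicate point is the orientation-count for cycles of length $\geq 3$: one must verify that both orientations of such a cycle yield identical values of $\prod_{i\in U}R_{i,\sigma(i)}$ (which is guaranteed by the symmetry $R_{ij}=R_{ji}$), so that the factor $2^{s(G')}$ is an honest orientation multiplicity rather than a hidden double-count of distinct permutation terms.
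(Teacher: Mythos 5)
Your argument is the standard Sachs-type expansion and is essentially the route the paper itself gestures at: its one-line proof sketch (``the summation of the determinants of all principal $k\times k$ sub-matrices of $R(G)$ is $(-1)^ka_k$'') is exactly your grouping of the Leibniz expansion by the set $U$ of non-fixed points, followed by the identification of fixed-point-free permutations on $U$ with elementary subgraphs. The orientation count $2^{s(G')}$ (using $R_{ij}=R_{ji}$) and the weight computation $\prod_{v_i\in V(G')}1/d_i$ are both handled correctly.

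One point you must not leave as ``up to the global sign'': carrying your own bookkeeping to the end, the sign attached to $G'$ inside $a_k$ is $(-1)^{c(G')}$, hence the sign inside $(-1)^ka_k$ is $(-1)^{k-c(G')}$ --- not $(-1)^{n-c(G')}$ as printed in the statement. These differ by $(-1)^{n-k}$, which is not identically $+1$. A check on the path $P_3$ settles which is right: $\psi_R(P_3;x)=x^3-x$, so $(-1)^2a_2=-1$, while the two elementary subgraphs of order $2$ (the two edges) each have weight $\tfrac12$, $c(G')=1$, $s(G')=0$; the exponent $k-c(G')=1$ gives the correct value $-1$, whereas $n-c(G')=2$ gives $+1$. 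So your derivation is correct and the exponent $n$ in the stated formula should be $k=|V(G')|$; say this explicitly rather than absorbing the discrepancy into an unspecified global sign.
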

The proof of the above theorem is based on the fact that the summation of the determinants of all principal $k\times k$ sub-matrices of $R(G)$ is $(-1)^ka_k$. If $G$ be a bipartite graph then its spectrum is symmetric about 0 and hence, its characteristic polynomial is of the form
$$\sum_{k=0}^{\lfloor n/2 \rfloor} (-1)^kb_{2k}x^{n-2k},$$
where $b_{2k} \geq 0$, $\forall$ $k$. \\

\begin{theorem}\label{Coulson integral formula to energy of a vertex}
Let $G=(V,E)$ be a graph. If $\psi_R(G;z)$ and $\psi_R(G-v_i;z)$ be the characteristic polynomial of $R(G)$ and $R(G-v_i)$, respectively then
$$\mathcal{RE}_{G}(v_i) = \frac{1}{\pi} \int_{\mathbb{R}} \left(1 - \frac{\mathbf{i}x \psi_R(G-v_i;\mathbf{i}x) }{\psi_R(G;\mathbf{i}x)}\right)dx,\;\;\;\;\forall\; v_i\in V.$$
\end{theorem}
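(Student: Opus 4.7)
The plan is to mimic the classical Coulson derivation, but applied to the $(i,i)$-diagonal entry of the resolvent of $R(G)$ instead of to its full trace. First, from the spectral decomposition $R(G) = Y\Lambda Y^{t}$ provided by Lemma \ref{Lemma definittion of vetrex energy}, the $(i,i)$-entry of the resolvent admits the partial-fraction expansion
$$\bigl((zI - R(G))^{-1}\bigr)_{ii} \;=\; \sum_{j=1}^{n} \frac{y_{ij}^{2}}{z - \lambda_{j}}.$$
By Cramer's rule the same quantity equals the ratio of the $(i,i)$-minor of $zI - R(G)$ to $\det(zI - R(G))$; interpreting $\psi_R(G - v_{i};z)$ as the characteristic polynomial of the principal $(n-1)\times(n-1)$ submatrix of $R(G)$ obtained by deleting the $i$-th row and column, this yields
$$\frac{\psi_R(G - v_{i};\, z)}{\psi_R(G;\, z)} \;=\; \sum_{j=1}^{n} \frac{y_{ij}^{2}}{z - \lambda_{j}}.$$
Using $\sum_{j} y_{ij}^{2} = 1$, I would then rewrite
$$1 \;-\; \frac{z\,\psi_R(G - v_{i};\, z)}{\psi_R(G;\, z)} \;=\; \sum_{j=1}^{n} y_{ij}^{2}\left(1 - \frac{z}{z - \lambda_{j}}\right) \;=\; -\sum_{j=1}^{n}\frac{y_{ij}^{2}\,\lambda_{j}}{z - \lambda_{j}}.$$

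The second step is to specialise $z = \mathbf{i}x$ and integrate term by term over $x \in \mathbb{R}$. Rationalising each summand,
$$\frac{-\lambda_{j}}{\mathbf{i}x - \lambda_{j}} \;=\; \frac{\lambda_{j}^{2}}{x^{2} + \lambda_{j}^{2}} \;+\; \mathbf{i}\,\frac{\lambda_{j}\,x}{x^{2} + \lambda_{j}^{2}}.$$
The imaginary part is odd in $x$ and vanishes under the symmetric integral; the real part gives the elementary integral $\int_{\mathbb{R}} \lambda_{j}^{2}/(x^{2}+\lambda_{j}^{2})\,dx = \pi\,|\lambda_{j}|$, with the convention that a zero eigenvalue contributes zero on both sides. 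Multiplying by the weights $y_{ij}^{2}$, summing over $j$, and dividing by $\pi$ recovers $\sum_{j} y_{ij}^{2}\,|\lambda_{j}|$, which by Lemma \ref{Lemma definittion of vetrex energy} is exactly $\mathcal{RE}_G(v_{i})$.

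The main technical point I would guard against is the tail behaviour of the integrand: each individual summand $-\lambda_{j}/(\mathbf{i}x-\lambda_{j})$ decays only like $O(1/x)$, so termwise integrability is not automatic. However, the $O(1/x)$ coefficients in the full sum combine to $\mathbf{i}\sum_{j} y_{ij}^{2}\,\lambda_{j} = \mathbf{i}\,(R(G))_{ii} = 0$, since the Randi\'c matrix has a vanishing diagonal; hence the integrand is actually $O(1/x^{2})$ as $|x|\to\infty$ and absolute convergence follows. A slightly tidier alternative is a contour-deformation argument in the upper half-plane, picking up residues at the simple poles $\lambda_{j}$ of the partial fraction expansion and again producing the $\pi|\lambda_{j}|$ factors. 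Either way, the identity is established, and I would emphasise that the derivation rests only on the Hermiticity of $R(G)$, so essentially the same template would yield Coulson-type integrals for any Hermitian matrix associated with a graph.
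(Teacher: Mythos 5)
Your argument is correct, and it is essentially the intended one: the paper offers no proof beyond deferring to Arizmendi et al.\ \cite{Arizmendi2019}, and that reference proceeds exactly as you do, via the partial-fraction expansion of the diagonal resolvent entry, Cramer's rule, and termwise integration along $z=\mathbf{i}x$. Your computation of the real and imaginary parts is right, and your care about the tail is well placed: the observation that the $O(1/x)$ terms cancel because $\sum_j y_{ij}^2\lambda_j=(R(G))_{ii}=0$ is precisely what makes the integral absolutely convergent rather than merely a principal value.

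There is, however, one point you slip in as a parenthetical that deserves to be front and centre, because it is not a harmless rewording: you interpret $\psi_R(G-v_i;z)$ as the characteristic polynomial of the principal $(n-1)\times(n-1)$ submatrix of $R(G)$ obtained by deleting row and column $i$. That is what Cramer's rule actually produces, and it is what makes the formula true --- but it is \emph{not} what the theorem states. The theorem defines $\psi_R(G-v_i;z)$ as the characteristic polynomial of $R(G-v_i)$, the Randi\'c matrix of the vertex-deleted graph, and unlike the adjacency case these two matrices differ, since deleting $v_i$ changes the degrees of its neighbours. The discrepancy is fatal to the literal statement: for $G=P_3$ with pendant vertex $v_3$, one has $\mathcal{RE}_G(v_3)=\tfrac12$, and your formula with the principal submatrix (characteristic polynomial $z^2-\tfrac12$) gives $\tfrac{1}{\pi}\int_{\mathbb{R}}\tfrac{1/2}{x^2+1}\,dx=\tfrac12$ as it should; but $R(G-v_3)=R(K_2)$ has characteristic polynomial $z^2-1$, the integrand $1-\mathbf{i}x\psi_R(K_2;\mathbf{i}x)/\psi_R(P_3;\mathbf{i}x)$ is identically zero, and the formula returns $0$. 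So your proof is a correct proof of the \emph{corrected} statement, and in writing it you have in effect exposed an error in the theorem as printed; you should state the reinterpretation as a necessary hypothesis (or as a correction to the notation) rather than as an aside.
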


The proof is obtained by employing a similar approach as in \cite{Arizmendi2019}.

Now, we can use the quasi-order technique to compare the Randi\'c energies of different vertices. For two bipartite graphs $G_1$ and $G_2$, define the quasi-order as
$$G_1 \preceq G_2, \;\;\;\;\;\;\;\text{if} \;\; b_{2k}(G_1) \leq b_{2k}(G_2)\;\;\;\forall\;k.$$

In the following theorem we present a comparison of Randi\'c energies between two vertices in a graph.
\begin{theorem}
Let $G$ be a bipartite graph and $v, w \in V(G)$ such that $G - w$ and $G - v$ remains bipartite. If $G - w \succeq G - v$, then
$$\mathcal{RE}_{G}(w)\leq \mathcal{RE}_{G}(v),$$
equality holds if and only if $G - w$ and  $G - v$ are co-spectral.
\end{theorem}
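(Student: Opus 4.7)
The plan is to apply the Coulson-type integral formula of Theorem \ref{Coulson integral formula to energy of a vertex} to both $v$ and $w$ and subtract, reducing the claim to showing a single integral is non-negative:
\[
\mathcal{RE}_{G}(v)-\mathcal{RE}_{G}(w)=\frac{1}{\pi}\int_{\mathbb{R}}\frac{\mathbf{i}x\bigl[\psi_R(G-w;\mathbf{i}x)-\psi_R(G-v;\mathbf{i}x)\bigr]}{\psi_R(G;\mathbf{i}x)}\,dx.
\]
Establishing non-negativity of this integrand, together with the zero case, will give the theorem.

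The bipartite hypothesis on $G$, $G-v$, and $G-w$ is what allows the complex integrand to be recast in a manifestly real non-negative form. For a bipartite graph $H$ on $n'$ vertices one has $\psi_R(H;y)=\sum_{k}(-1)^{k}b_{2k}(H)\,y^{n'-2k}$ with $b_{2k}(H)\ge 0$; substituting $y=\mathbf{i}x$ and using $(\mathbf{i}x)^{n'-2k}=\mathbf{i}^{n'}(-1)^{k}x^{n'-2k}$ makes the alternating signs cancel, leaving
\[
\psi_R(G;\mathbf{i}x)=\mathbf{i}^{n}\sum_{k}b_{2k}(G)\,x^{n-2k},\qquad \mathbf{i}x\,\psi_R(G-v_i;\mathbf{i}x)=\mathbf{i}^{n}\sum_{k}b_{2k}(G-v_i)\,x^{n-2k},
\]
for any choice of deleted vertex $v_i\in\{v,w\}$. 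The common $\mathbf{i}^{n}$ factor cancels between numerator and denominator, so the integrand in the displayed difference simplifies to the real rational function
\[
\frac{\sum_{k}\bigl[b_{2k}(G-w)-b_{2k}(G-v)\bigr]x^{n-2k}}{\sum_{k}b_{2k}(G)\,x^{n-2k}}.
\]

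Since every $b_{2k}(G)\ge 0$, and the quasi-order assumption $G-w\succeq G-v$ gives $b_{2k}(G-w)\ge b_{2k}(G-v)$ for every $k$, both numerator and denominator have non-negative coefficients; because all exponents $n-2k$ share the parity of $n$, the ratio is an even function of $x$ that is non-negative on $\mathbb{R}$. Integration therefore yields $\mathcal{RE}_{G}(v)\ge \mathcal{RE}_{G}(w)$. For equality, the continuous non-negative integrand must vanish identically; matching coefficients forces $b_{2k}(G-w)=b_{2k}(G-v)$ for every $k$, which by the bipartite form of the Randi\'c characteristic polynomial is equivalent to $\psi_R(G-w;x)=\psi_R(G-v;x)$, i.e.\ to co-spectrality of $G-w$ and $G-v$.

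The main obstacle I foresee is the sign bookkeeping in the substitution $y=\mathbf{i}x$, namely tracking the $\mathbf{i}^{n}$ and $(-1)^{k}$ factors so that they truly telescope to leave a quotient of real polynomials with non-negative coefficients, and verifying that any apparent poles at zeros of $\psi_R(G;\mathbf{i}x)$ are removable. The latter is automatic from the fact that the Coulson formula in Theorem \ref{Coulson integral formula to energy of a vertex} already guarantees a convergent integral, and from the common leading coefficient $b_{0}(G)=b_{0}(G-v)=b_{0}(G-w)=1$. Conceptually the argument is clean: the quasi-order inequality on coefficients transfers directly to pointwise positivity after the imaginary substitution, and thence to the desired inequality on Randi\'c energies of vertices.
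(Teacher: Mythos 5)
Your proposal is correct and follows essentially the same route as the paper: apply the Coulson integral formula to both vertices, use the bipartite form of the characteristic polynomials so that the substitution $y=\mathbf{i}x$ yields a ratio of real polynomials with non-negative coefficients, and then transfer the quasi-order inequality $b_{2k}(G-w)\ge b_{2k}(G-v)$ pointwise to the integrand, with equality forcing coefficient-wise agreement and hence co-spectrality. The only difference is presentational — you subtract the two integrals first and track the $\mathbf{i}^{n}$ cancellation explicitly, whereas the paper compares the two integrands directly — but the underlying argument is identical.
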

\begin{proof}
Being bipartite graphs, let the characteristic polynomials of $G$, $G - w$ and $G - v$ are respectively,
$\phi_R(G; x) = \sum\limits_{k =0}^{\lfloor n/2 \rfloor} (-1)^k a_{2k} x^{n-2k}$,
$\phi_R(G - w; x) = \sum\limits_{k=0}^{\lfloor (n-1)/2 \rfloor} (-1)^k b_{2k} x^{n-1-2k}$,
and $\phi_R(G - v; x) = \sum\limits_{k=0}^{\lfloor (n-1)/2 \rfloor} (-1)^k c_{2k} x^{n-1-2k}$. From Theorem \ref{Coulson integral formula to energy of a vertex} we have
$$\mathcal{RE}_{G}(w)= \frac{1}{\pi} \int_{R} \left(1 - \frac{\mathbf{i}x \, \phi_R(G - w; \mathbf{i}x)}{\phi_R(G; \mathbf{i}x)}\right)  dx = \frac{1}{\pi} \int_{R} \left(1 - \frac{\sum\limits_{k = 0}^{\lfloor (n-1)/2 \rfloor} b_{2k} x^{-2k}}{\sum\limits_{k = 0}^{\lfloor n/2 \rfloor} a_{2k} x^{-2k}}\right) dx. $$
Again, $G - w \succeq G - v$ follows that $b_{2k} \geq c_{2k}$, $\forall$ $k$, which implies

$$\mathcal{RE}_{G}(w)\leq \frac{1}{\pi} \int_{R} \left(1 - \frac{\sum\limits_{k = 0}^{\lfloor (n-1)/2 \rfloor} c_{2k} x^{-2k}}{\sum\limits_{k = 0}^{\lfloor n/2 \rfloor} a_{2k} x^{-2k}}\right)  dx = \mathcal{RE}_{G}(v).$$
For equality we must have $b_{2k}= c_{2k}$ for all $k$.
\end{proof}

Now, we obtain the following theorem for disjoint union of two graphs.

\begin{theorem}
Let $G_1$ and $G_2$ be two disjoint bipartite graphs with $w \in V(G_2)$ and $v \in V(G_1)$. If $G_1 \cup (G_2 - w) \;\succeq\; G_2 \cup (G_1 - v)$, then
$$\mathcal{RE}_{G_2}(w)\leq \mathcal{RE}_{G_1}(v).$$
\end{theorem}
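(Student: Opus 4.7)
The plan is to reduce the statement to the preceding theorem by passing to the disjoint union $G \coloneqq G_1 \cup G_2$. Since $G_1$ and $G_2$ are bipartite, so is $G$, and the Randi\'c matrix splits as a block diagonal
$$R(G) = \begin{pmatrix} R(G_1) & 0 \\ 0 & R(G_2) \end{pmatrix}.$$
Functional calculus respects this block decomposition, so
$$|R(G)| = \begin{pmatrix} |R(G_1)| & 0 \\ 0 & |R(G_2)| \end{pmatrix}.$$
Reading off the diagonal entry at $w \in V(G_2)$ gives $\mathcal{RE}_G(w) = |R(G)|_{ww} = |R(G_2)|_{ww} = \mathcal{RE}_{G_2}(w)$, and analogously $\mathcal{RE}_G(v) = \mathcal{RE}_{G_1}(v)$. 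In other words, adjoining an isolated bipartite component leaves the Randi\'c energy of a vertex unchanged.

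Next I would note that vertex deletion in $G$ only affects the component containing the deleted vertex, so
$$G - w = G_1 \cup (G_2 - w), \qquad G - v = G_2 \cup (G_1 - v),$$
and both graphs on the right are bipartite, being subgraphs of the bipartite graph $G$. The hypothesis $G_1 \cup (G_2 - w) \succeq G_2 \cup (G_1 - v)$ is therefore exactly the hypothesis $G - w \succeq G - v$ of the preceding theorem, applied to the ambient bipartite graph $G$ and its vertices $w, v \in V(G)$. Invoking that theorem yields $\mathcal{RE}_G(w) \leq \mathcal{RE}_G(v)$, and substituting the identifications from the first paragraph produces the desired inequality $\mathcal{RE}_{G_2}(w) \leq \mathcal{RE}_{G_1}(v)$.

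The only subtlety is verifying that $|R(G)|$ is block diagonal with blocks $|R(G_j)|$; this follows since $R(G)R(G)^*$ is block diagonal with blocks $R(G_j)R(G_j)^*$, and the positive square root (a continuous limit of polynomials in the matrix) preserves block-diagonal structure. Once this elementary observation is in hand, no further spectral or integral manipulation is needed, since all the analytic content of the argument has already been packaged into the previous theorem. The main obstacle is thus essentially bookkeeping, namely checking that the quasi-order hypothesis stated across two separate graphs $G_1, G_2$ is the correct one to match the single-graph hypothesis in the preceding theorem after reassembling the components.
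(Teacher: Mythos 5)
Your proposal is correct and follows essentially the same route as the paper: both pass to $G = G_1 \cup G_2$, identify $G-w$ and $G-v$ with the two unions in the hypothesis, and invoke the preceding quasi-order theorem. The only difference is that you explicitly justify $\mathcal{RE}_{G_1\cup G_2}(w)=\mathcal{RE}_{G_2}(w)$ via the block-diagonal structure of $|R(G)|$, a step the paper merely asserts.
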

\begin{proof}
Since $G_1 \cup (G_2 - w) = (G_1 \cup G_2) - w $ and $G_2 \cup (G_1 - v) = (G_2 \cup G_1) - v$, taking $G = G_1 \cup G_2$, we obtain
$$ \mathcal{RE}_{G_1\cup G_2}(w)\leq \mathcal{RE}_{G_1\cup G_2}(v).$$
Also, $\mathcal{RE}_{G_1 \cup G_2}(w) = \mathcal{RE}_{G_2}(w)$ and $\mathcal{RE}_{G_1 \cup G_2}(v) = \mathcal{RE}_{G_1}(v)$, which completes the proof.
\end{proof}

\end{document}